\newtheorem{df}{Definition}[section]
\newtheorem{thm}{Theorem}[section]
\newtheorem{conjecture}{Conjecture}[section]
\newtheorem{lem}{Lemma}[section]
\newtheorem{cor}{Corollary}[section]
\newenvironment {proof} {\noindent{\em Proof.}}{\hspace*{\fill}$\Box$\par\vspace{4mm}}
\title{Solution to a conjecture on\\ the proper connection number of graphs\footnote{Supported by NSFC No.11371205, 11531011, and ``973" program No.2013CB834204.}}
\author{\small {Fei Huang, Xueliang Li, Zhongmei Qin}\\
{\small  Center for Combinatorics and LPMC}\\
{\small Nankai University, Tianjin 300071, P.R. China}\\
{\small Email: huangfei06@126.com, lxl@nankai.edu.cn, qinzhongmei90@163.com}\\
{\small Colton Magnant}\\
{\small Department of Mathematical Sciences}\\
{\small Georgia Southern University, Statesboro, GA 30460-8093, USA}\\
{\small Email: cmagnant@georgiasouthern.edu}
}
\date{}
\begin{document}
\maketitle
\begin{abstract}
 A path in an edge-colored graph is called a proper path if no two adjacent edges of the path receive the same color. For a connected graph $G$, the proper connection number $pc(G)$ of $G$ is defined as the minimum number of colors needed to color its edges, so that every pair of distinct vertices of $G$ is connected by at least one proper path in $G$. Recently, Li and Magnant in [Theory Appl. Graphs 0(1)(2015), Art.2] posed the following conjecture: If $G$ is a connected noncomplete graph of order $n \geq 5$ and minimum degree $\delta(G) \geq n/4$, then $pc(G)=2$. In this paper, we show that this conjecture is true except for two small graphs on 7 and 8 vertices, respectively. As a byproduct we obtain that if $G$ is a connected bipartite graph of order $n\geq 4$ with $\delta(G)\geq \frac{n+6}{8}$, then $pc(G)=2$. \\[2mm]
\textbf{Keywords:} proper connection number; proper-path coloring; bridge-block tree.\\
\textbf{AMS subject classification 2010:} 05C15, 05C40, 05C07.\\
\end{abstract}

\section{Introduction}

All graphs in this paper are undirected, finite and simple. We follow \cite{BM} for graph theoretical notation and terminology not defined here. Let $G$ be a connected graph with vertex set $V(G)$ and edge set $E(G)$. For any two disjoint subsets $X$ and $Y$ of $V(G)$, we use $E_G(X,Y)$ to denote the set of edges of $G$ that have one end in $X$ and the other in $Y$. Denote by $|E_G(X,Y)|$ the number of edges in $E_G(X,Y)$. If $Y=V(G)\setminus X$, we use $d(X)=|E_G(X,Y)|$ for short. A graph $G$ is called {\it Hamilton-connected} if there is a Hamilton (spanning) path between any pair of vertices in $G$.

Let $G$ be a nontrivial connected graph with an associated {\it edge-coloring} $c : E(G)\rightarrow \{1, 2, \ldots, t\}$, $t \in \mathbb{N}$, where adjacent edges may have the same color. If adjacent edges of $G$ are assigned different colors by $c$, then $c$ is a {\it proper (edge-)coloring}. For a graph $G$, the minimum number of colors needed in a proper coloring of $G$ is referred to as the {\it edge-chromatic number} of $G$ and denoted by $\chi'(G)$. A path of an edge-colored graph $G$ is said to be a {\it rainbow path} if no two edges on the path have the same color. The graph $G$ is called {\it rainbow connected} if for any two vertices there is a rainbow path of $G$ connecting them. An edge-coloring of a connected graph is a {\it rainbow connecting coloring} if it makes the graph rainbow connected. This concept of rainbow connection of graphs was
introduced by Chartrand et al.~\cite{CJMZ} in 2008. The \emph{rainbow connection number} $rc(G)$ of a
connected graph $G$ is the smallest number of colors that are needed in order to make $G$ rainbow connected. The readers who are interested in this topic can see \cite{LSS,LS} for a survey.

Motivated by rainbow coloring and proper coloring in graphs, Andrews et al.~\cite{ALLZ} and Borozan et al.~\cite{BFGMMMT} introduced the concept of proper-path coloring. Let $G$ be a nontrivial connected graph with an edge-coloring. A path in $G$ is called a \emph{proper path} if no two adjacent edges of the path are colored with the same color. An edge-coloring of a connected graph $G$ is a \emph{proper-path coloring} if every pair of distinct vertices of $G$ are connected by a proper path in $G$. If $k$ colors are used on the edges of $G$, then $c$ is referred to as a {\it proper-path $k$-coloring}. An edge-colored graph $G$ is {\it proper connected} if any two vertices of $G$ are connected by a proper path. For a connected graph $G$, the {\it proper connection number} of $G$, denoted by $pc(G)$, is defined as the smallest number of colors that are needed in order to make $G$ proper connected.

The proper connection of graphs has the following application background. When building a communication network between wireless signal towers, one fundamental requirement is that the network be connected. If there cannot be a direct connection between two towers $A$ and $B$, say for example if there is a mountain in between, there must be a route through other towers to get from $A$ to $B$. As a wireless transmission passes through a signal tower, to avoid interference, it would help if the incoming signal and the outgoing signal do not share the same frequency. Suppose we assign a vertex to each signal tower, an edge between two vertices if the corresponding signal towers are directly connected by a signal and assign a color to each edge corresponding to the assigned frequency used for the communication. Then the total number of frequencies needed to assign frequencies to the connections between towers so that there is always a path avoiding interference between each pair of towers is precisely the proper connection number of the corresponding graph.

Let $G$ be a nontrivial connected graph of order $n$ (number of vertices) and size $m$ (number of edges). Then the proper connection number of $G$ satisfies the following natural bounds:
$$1\leq pc(G)\leq \min\{ rc(G), \chi'(G)\}\leq m.$$
Furthermore, $pc(G) = 1$ if and only if $G = K_n$, and $pc(G) = m$ if and only if $G = K_{1,m}$ is a star of size $m$. There have been a lot of results on the proper connection of graphs. For more details, we refer to \cite{ALLZ,BFGMMMT,GLQ,LWY} and a dynamic survey \cite{LC}.

When density is provided by a minimum degree assumption, the following result is sharp.

\begin{thm}\cite{BFGMMMT}\label{thm1}
If $G$ is a connected noncomplete graph with $n \geq 68$ vertices and $\delta(G) \geq n/4$, then $pc(G)=2$.
\end{thm}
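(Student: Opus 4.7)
The plan is to produce a proper $2$-edge-coloring of $G$: since $G\neq K_n$ gives $pc(G)\geq 2$, exhibiting any such coloring suffices. The argument splits naturally into a reduction step that handles cut structure and a core step that handles the $2$-connected case.

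First I would examine the bridge-block tree of $G$. Because $\delta(G)\geq n/4$, every non-trivial block contains a non-cut vertex all of whose neighbors lie inside the block, so each non-trivial block has order at least $n/4+1$. Hence the bridge-block tree is very shallow (at most a handful of blocks arranged path-like), and its leaves harbor high-degree non-cut vertices. This makes it feasible to combine block-wise proper $2$-colorings by suitably permuting the two colors at each cut vertex, provided one proves within each block the stronger property that any two vertices are joined by proper paths whose last edge can be chosen in either color. Once this ``strong'' property is established for $2$-connected graphs with $\delta\geq n/4$, propagation across the bridge-block tree is routine.

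For the $2$-connected core, I would aim at a spanning subgraph $H\subseteq G$ admitting a transparent proper $2$-coloring. The natural target is a Hamiltonian cycle: under $\delta(G)\geq n/4$ with $n\geq 68$, a combination of Chv\'atal-Erd\H{o}s type hypotheses, closure arguments, and common-neighborhood estimates (any two vertices have $|N(u)\cup N(v)|\leq n$, yielding many length-$2$ connectors) should either force a Hamiltonian cycle or furnish a spanning $2$-edge-connected subgraph very close to one. Coloring alternately around such a Hamiltonian-like structure, with a chord or extra ear to repair parity, produces a proper $2$-coloring of the block satisfying the strong end-color condition used in the reduction step.

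The main obstacle I anticipate is the endgame when no convenient Hamiltonian structure is available. There one must deal with small separators ($2$-vertex or $2$-edge cuts) and sporadic configurations in which every obvious $2$-coloring leaves some vertex pair without a proper path. This is exactly where $n\geq 68$ bites: enough slack is needed so that averaging on $\delta\geq n/4$ rules out such configurations, and the analysis will split into several cases according to the local structure around each small cut. I expect this case analysis to absorb the bulk of the technical work and to determine the explicit constant $68$.
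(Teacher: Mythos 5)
Your reduction over the bridge-block tree is sound in outline (each leaf block contains a non-cut vertex and hence has more than $n/4$ vertices, so there are at most three leaf blocks and the tree is small; the ``strong property'' of Definition~\ref{df1} together with Lemma~\ref{lem9} is exactly the right gluing mechanism). The genuine gap is in your $2$-connected core step. A $2$-connected graph with $\delta\geq n/4$ need not be Hamiltonian, nor need it contain any spanning ``Hamiltonian-like'' structure that can be colored alternately: $K_{n/4,\,3n/4}$ has independence number $3n/4$ and no Hamiltonian cycle, and, for a non-bipartite example, three cliques of order $(n+4)/3$ glued along a common pair of vertices form a $2$-connected graph with $\delta>n/4$ in which deleting two vertices leaves three components, so no Hamiltonian cycle (and no spanning cycle-plus-ear structure) exists. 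Chv\'atal--Erd\H{o}s and closure arguments simply do not engage at minimum degree $n/4$, so your core step collapses precisely on the graphs that make the theorem nontrivial, and the anticipated ``endgame case analysis around small cuts'' would have to carry the entire proof with no mechanism supplied for it.

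The paper's route (its Theorem~\ref{thm6} supersedes this statement for all $n\geq 9$, showing the constant $68$ is not needed) is entirely different and supplies the missing idea. It never seeks Hamiltonian structure in large graphs. Instead it passes to a bipartite spanning subgraph $H$ of maximum size, which by Theorem~\ref{thm3} satisfies $\delta(H)\geq n/8$; since bridgeless bipartite graphs satisfy $pc\leq 2$ with the strong property (Lemma~\ref{lem8}), the whole problem reduces to the bridge-block tree of $H$. Edge-maximality of $H$ forces at most two edges of $G$ to cross any cut-edge of $H$ (Claim~1 in that proof), which bounds the number and size of end-blocks and, for $n\geq 25$, yields a contradiction whenever that tree is not a path; Hamilton-connectivity and panconnectivity (Theorems~\ref{thm4} and~\ref{thm5}) appear only in the finite endgame $13\leq n\leq 24$, applied to the small dense pieces $D_i$, not to $G$ itself. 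If you want to salvage your plan, the maximum bipartite spanning subgraph is the key device: it converts the hypothesis $\delta\geq n/4$ into a spanning structure that is $2$-colorable for structural (ear-decomposition) reasons, with no Hamiltonicity required.
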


Although the bound on the minimum degree in Theorem \ref{thm1} is best possible, there is nothing to suggest that the bound $n \geq 68$ is required. Very recently, Li and Magnant \cite{LC} posed the following conjecture in the dynamic survey \cite{LC}.
\begin{conjecture}(\cite{LC}, Conjecture 5.1)\label{conj1}
If $G$ is a connected noncomplete graph with $n \geq 5$ vertices and $\delta(G) \geq n/4$, then $pc(G)=2$.
\end{conjecture}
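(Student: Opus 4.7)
Since Theorem \ref{thm1} already resolves the case $n \geq 68$, the central task is to verify Conjecture \ref{conj1} for $5 \leq n \leq 67$ while identifying the two claimed exceptions on $7$ and $8$ vertices. Throughout, the lower bound $pc(G) \geq 2$ is automatic from $G$ being noncomplete, so I only need to exhibit a proper-path $2$-coloring. My plan is to use the bridge-block tree of $G$ as the main structural tool (as the abstract hints). Since $\delta(G) \geq n/4$, any bridge $uv$ separates $V(G)$ into two parts each of size at least $\lceil n/4 \rceil$, which forces the bridge-block tree of $G$ to have at most three leaf-blocks and places strong order constraints on each block. I would enumerate the possible shapes of this tree and handle each case separately.

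In the $2$-edge-connected case, the cleanest route to $pc(G) \leq 2$ is to produce a spanning Hamilton path $v_1 v_2 \cdots v_n$, color its edges alternately with colors $1$ and $2$, and color remaining edges arbitrarily. When the minimum-degree hypothesis alone is not enough to force a Hamilton path, I would instead build a spanning collection of a bounded number of long paths or cycles linked by short pieces whose color choices can be reconciled at the junctions; Pósa-type rotations should produce such structures when $\delta(G) \geq n/4$. In the case when bridges are present, I would apply the analysis inductively to each block, producing a proper-path $2$-coloring of the block that is compatible with its incident cut vertices, and then choose the color of each bridge to differ from the colors of the edges it meets at both endpoints inside the adjacent blocks. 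The bipartite byproduct with bound $\delta(G) \geq (n+6)/8$ should then emerge from the same analysis, since bipartite blocks impose the strictest parity and compatibility conditions at cut vertices and therefore require the most careful bookkeeping.

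The main obstacle, and the one that will surface the two small exceptions, is the situation where the bridge-block tree has several small blocks that create incompatible color demands at a shared cut vertex, or where a single dense block lacks any spanning structure amenable to an alternating $2$-coloring. For these configurations I would perform a finite verification: enumerate all noncomplete connected graphs with $n \in \{5,6,7,8\}$ satisfying $\delta(G) \geq n/4$, check the conjecture case by case, and pin down exactly which two small graphs on $7$ and $8$ vertices force $pc(G) \geq 3$; these should correspond to extremal configurations where every candidate spanning structure hits a triangle or an odd cycle in a way that blocks any $2$-coloring from being a proper-path coloring.
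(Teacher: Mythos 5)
First, note that the statement you are trying to prove is false as written: the paper's main contribution is showing that Conjecture~\ref{conj1} fails for exactly two graphs $G_1$ (on $7$ vertices) and $G_2$ (on $8$ vertices), both with $pc=3$, and what is actually proved is the corrected Theorem~\ref{th1}. You do anticipate exceptions on $7$ and $8$ vertices, which is the right instinct, but your plan has two substantive gaps. The first is the reduction to $5 \le n \le 67$ via Theorem~\ref{thm1}: the paper explicitly documents gaps in the published proof of that theorem (the Hamiltonicity claims on pages 2558--2559 of \cite{BFGMMMT}), so it cannot be taken off the shelf; the authors instead give a self-contained argument for all $n \ge 9$. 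More importantly, your treatment of the middle range is not a proof. With $\delta(G) \ge n/4$ the graph need not have a Hamilton path (it can have a cut vertex with three large branches), and P\'osa rotations give you nothing across cut vertices. Your fallback --- decompose along bridges, properly $2$-color each block, and reconcile colors at the junctions --- assumes that every $2$-connected block admits a proper-path $2$-coloring compatible with its attachment vertices. That is exactly the hard part, and it is unjustified: for general $2$-connected graphs one only knows $pc \le 3$ with the strong property (Lemma~\ref{lem5}); $pc = 2$ with the strong property is available only for bipartite $2$-connected or $3$-connected noncomplete blocks (Lemma~\ref{lem4}, Corollary~\ref{cor1}).

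The paper's actual mechanism, which your proposal is missing, is to pass to a bipartite spanning subgraph $H$ of $G$ with the maximum number of edges. Then $\delta(H) \ge \tfrac{1}{2}\delta(G) \ge n/8$; if $H$ is bridgeless one is done by Lemmas~\ref{lem1} and~\ref{lem8}, and otherwise the edge-maximality of $H$ forces at most two edges of $G$ to cross any bridge-separation of $H$ (Claim~1 in the proof of Theorem~\ref{thm6}), which in turn forces the bridge-block tree of $H$ to be a path or a $K_{1,3}$ with three large end-blocks. The remaining configurations are eliminated by degree counting for $n \ge 25$ and by explicit Hamilton-connectedness and panconnectedness arguments (Theorems~\ref{thm4} and~\ref{thm5}) for $13 \le n \le 24$. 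It is the bridge-block tree of the extremal bipartite subgraph $H$, not of $G$ itself, that carries the argument. Finally, your description of the exceptional graphs as configurations where ``every candidate spanning structure hits a triangle or an odd cycle'' is off target: $G_1$ and $G_2$ fail because a cut vertex has three branches, each too small to be entered and exited along distinct proper continuations with only two colors --- the same obstruction as in $K_{1,3}$ --- not because of any parity issue.
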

They thought that the bound $5$ on $n$ would be the best possible since $pc(K_{1,3})=3$.

We are interested in this conjecture and study the proof of Theorem~\ref{thm1} carefully. Unfortunately, we find that their proof has some gaps. For example, in \cite{BFGMMMT} Page 2558, Line 38, there is a conclusion  ``it is easy to check that $\langle \{v\} \cup C_1 \rangle$ contains a Hamilton path $P_1$ such that $v \in endpoints(P_1)$." In fact, we know that the minimum degree of $C_1$ is at least $n/4-1$, whereas $|C_1| \leq \frac{n-1}{2}$. These two conditions can only guarantee that $C_1$ has a Hamilton path rather than a Hamilton cycle. So it is not easy to show the conclusion. Another place is on Page 2558, Line 44, that ``it is easy to check that there are exactly two components $C_{21}, C_{22}$ with $|C_{21}| \leq |C_{22}|$ in $C_2-S$." We find that the case three components may exist. One more place is on Page 2559, Line 12, that ``By Theorem 9, $C_3$ is Hamiltonian so it contains a spanning path $P$ with $v \in endpoints(P)$." Also, it is not very easy to show that $C_3$ is Hamiltonian by their Theorem 9.

Because of the above gaps, we have to find new methods to attack Conjecture~\ref{conj1}. As a result, we completely solve Conjecture~\ref{conj1} and find two small counterexample graphs $G_1, G_2$ shown in Figure~1, for which it is easy to check that $pc(G_1)=pc(G_2)=3$.

\begin{figure}
  \centering
 \scalebox{1}{\includegraphics{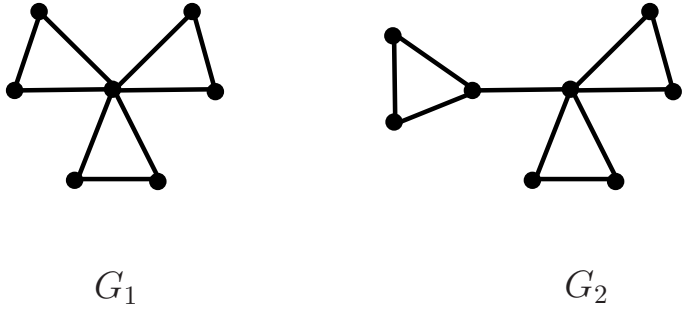}}\\
  \caption{Two counterexamples of Conjecture~\ref{conj1}}
\end{figure}

Hence, we modify Conjecture \ref{conj1} and give a new statement as follows.

\begin{thm}\label{th1}
Let $G$ be a connected noncomplete graph of order $n \geq 5$. If $G \notin \{G_1, G_2\}$ and $\delta(G) \geq  n/4$, then $pc(G)=2$.
\end{thm}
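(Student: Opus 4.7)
The approach I would take is a structural case analysis driven by the bridge-block tree $T$ of $G$, together with a detailed argument in the 2-edge-connected case. The minimum-degree hypothesis $\delta(G) \geq n/4$ severely restricts $T$: every leaf block $B$ of $T$ contains at least one vertex all of whose neighbors lie in $B$, so $|V(B)| \geq \delta(G)+1 \geq n/4 + 1$. Consequently $T$ has at most $3$ leaves, and only a few shapes of $T$ are possible. I would split the proof according to whether $G$ has $0$, exactly $1$, or $\geq 2$ bridges, and handle each case separately; this is where the emphasis on the bridge-block tree from the abstract's keywords would enter.

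The core of the argument is the case when $G$ is $2$-edge-connected (so $T$ is trivial). Here I would prove: if $G$ is $2$-edge-connected, noncomplete, of order $n\geq 5$ with $\delta(G) \geq n/4$ and $G \notin \{G_1, G_2\}$, then $pc(G) = 2$. The strategy is to fix a convenient base vertex $v$, build a long cycle or path through $v$, color it alternately with $\{1,2\}$, and then attach the remaining vertices via pairs of edges in different colors, adjusting locally near conflict points. This mirrors the Borozan et al.\ plan but avoids each of the three gaps identified in the introduction: instead of asserting Hamiltonicity of certain dense induced subgraphs, I would rely only on Hamilton-\emph{path} conclusions from the min-degree bounds that are actually justified by standard theorems, and I would treat the possibility of three (rather than exactly two) components after a small cut as a genuine subcase.

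For graphs with bridges, I would apply the 2-edge-connected result to each block of size sufficient for the theorem to apply, then combine the block colorings by coloring each bridge compatibly with both of its endpoints (which is possible since only one color is forced at each endpoint of a bridge). The density forces a "big" central block containing most vertices, with only a bounded number of small leaf-blocks hanging off; these leaf-blocks have very restricted structure and can be dealt with by ad hoc constructions. The bipartite byproduct stated in the abstract then follows by applying the main theorem to a suitable auxiliary graph arising from one of the two parts, whose density is amplified by the bipartite structure.

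The main obstacle is the 2-edge-connected case for $n$ in the range $[5,67]$, where Theorem~\ref{thm1} gives no shortcut, the counterexamples $G_1,G_2$ live, and the case analysis becomes delicate. I would expect to introduce a modest threshold $n_0$ below which verification is essentially direct (by a careful hand analysis or an exhaustive check) and above which the general structural argument runs uniformly. Pinning down this threshold and verifying that $G_1$ and $G_2$ really are the \textbf{only} exceptions --- i.e.\ that no further sporadic graphs evade the construction --- is where I expect the bulk of the technical work to lie.
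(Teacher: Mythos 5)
There is a genuine gap at the heart of your plan: the 2-edge-connected case. You propose to handle it by fixing a base vertex, building a long cycle or path, coloring it alternately, and attaching the remaining vertices --- which is precisely the Borozan et al.\ scheme whose gaps motivated this paper --- and you offer no concrete mechanism for repairing it; you only promise to ``avoid'' the gaps and defer the bulk of the work to an unspecified analysis on the range $[5,67]$. The difficulty is real: for a nonbipartite 2-edge-connected graph the general tools only give $pc(G)\le 3$ (Lemma~\ref{lem7}), and getting down to $2$ from density alone is exactly the part that broke in \cite{BFGMMMT}. The paper's actual route never attacks this case directly. Instead, for $n\ge 9$ it passes to a \emph{maximum bipartite spanning subgraph} $H$ of $G$; by Theorem~\ref{thm3} one has $\delta(H)\ge \delta(G)/2\ge n/8$, and if $H$ is bridgeless then $pc(G)\le pc(H)=2$ immediately by Lemmas~\ref{lem1} and~\ref{lem8}, because \emph{bipartite} bridgeless graphs always have proper connection number $2$ with the strong property. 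The entire structural analysis (end-blocks of size at least $n/4$, at most three leaves, $H^*\cong K_{1,3}$) is then carried out on the bridge-block tree of $H$, not of $G$, and the maximality of $H$ is what bounds the number of $G$-edges crossing a cut-edge of $H$ (Claim~1 of Theorem~\ref{thm6}). This bipartite-subgraph device is the key idea missing from your proposal.

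Two smaller points. First, your plan to combine block colorings by coloring each bridge compatibly with both endpoints is not enough when the bridge-block tree has a vertex of degree $3$; the paper needs the strong property (Definition~\ref{df1}, Lemmas~\ref{lem6} and~\ref{lem9}) and, in the $K_{1,3}$ configuration, explicit Hamilton-connected or panconnected substructures in the three branches glued at the central vertex, built from the Dirac-type Theorems~\ref{thm4} and~\ref{thm5}. Second, the bipartite byproduct (Theorem~\ref{thm8}) is not obtained by applying the main theorem to an auxiliary graph built from one part: since $G$ is already bipartite one works with the bridge-block tree of $G$ itself, and the threshold $\delta\ge(n+6)/8$ rules out the $K_{1,3}$ configuration by a counting contradiction. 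The small cases $5\le n\le 8$, including the verification that $G_1$ and $G_2$ are the only exceptions, are indeed done by a direct hand analysis on the connectivity, as you anticipated.
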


\section{Preliminaries}

At the beginning of this section, we list some known fundamental results and definitions on proper-path
coloring.

\begin{lem}\cite{ALLZ}\label{lem1}
If $G$ is a nontrivial connected graph and $H$ is a connected spanning
subgraph of $G$, then $pc(G) \leq pc(H)$. In particular, $pc(G) \leq pc(T)$
for every spanning tree $T$ of $G$.
\end{lem}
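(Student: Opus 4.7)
The plan is to prove the inequality by color extension: take an optimal proper-path coloring of the spanning subgraph $H$ and extend it to all of $G$ in a way that does not destroy any of the existing proper paths. Since $H$ is spanning, $V(G)=V(H)$, so every pair of vertices that needs a proper path in $G$ already lives in $H$.

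Concretely, I would let $c_H:E(H)\to\{1,2,\dots,pc(H)\}$ be a proper-path coloring of $H$ with $pc(H)$ colors, and define an edge-coloring $c$ of $G$ by $c(e)=c_H(e)$ for every $e\in E(H)$ and $c(e)=1$ (or any fixed color from the palette) for every $e\in E(G)\setminus E(H)$. For an arbitrary pair of distinct vertices $u,v\in V(G)=V(H)$, the coloring $c_H$ provides a proper $u$--$v$ path $P$ in $H$. Because $P$ uses only edges of $H$ and $c$ agrees with $c_H$ on those edges, $P$ is still a proper path in the coloring $c$ of $G$. Hence $c$ is a proper-path coloring of $G$ with $pc(H)$ colors, so $pc(G)\le pc(H)$.

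The ``in particular'' clause is then immediate: every spanning tree $T$ of $G$ is a connected spanning subgraph of $G$, so applying the inequality just established with $H=T$ gives $pc(G)\le pc(T)$.

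There is essentially no obstacle here; the only point that requires a word of care is the observation that extending the coloring to edges outside $H$ cannot invalidate a proper path that lies entirely inside $H$, since propriety is a condition on consecutive edges of the specific path and those edges retain their original colors. No new macros, case analysis, or structural results are needed.
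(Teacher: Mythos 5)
Your argument is correct and is the standard proof of this lemma: since $H$ is spanning, any proper-path coloring of $H$ extended arbitrarily to the remaining edges of $G$ still connects every vertex pair by a proper path lying inside $H$. The paper cites this result from Andrews et al.\ without reproducing a proof, and your extension argument is exactly the intended one.
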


\begin{lem}\cite{ALLZ}\label{lem2}
If $T$ is a nontrivial tree, then $pc(T) = \chi'(T) = \Delta(T)$.
\end{lem}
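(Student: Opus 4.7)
The plan is to prove the two equalities in the statement separately. The equality $\chi'(T)=\Delta(T)$ is classical: every tree is bipartite, so K\"onig's edge-coloring theorem yields $\chi'(T)=\Delta(T)$. (Alternatively, one roots $T$ at an arbitrary vertex and colors edges by a greedy sweep from the root outward: each new edge, from a child $u$ to its parent, is adjacent to at most $d_T(u)-1\leq \Delta(T)-1$ already-colored edges, so one of the $\Delta(T)$ colors is available.) I would either cite this or include the one-line inductive argument.

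The substantive part is $pc(T)=\chi'(T)$, and the key fact driving both inequalities is that any two vertices of a tree are joined by a \emph{unique} path. For the upper bound $pc(T)\leq\chi'(T)$, I would take an optimal proper edge-coloring $c$ of $T$ using $\chi'(T)=\Delta(T)$ colors. Given two distinct vertices $u,v$, the unique $u$-$v$ path $P$ in $T$ has the property that every two consecutive edges of $P$ are adjacent in $T$, hence receive different colors under $c$. Thus $P$ is a proper path, so $c$ is a proper-path coloring and $pc(T)\leq \chi'(T)$.

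For the lower bound $pc(T)\geq\Delta(T)$, I would pick a vertex $v$ with $d_T(v)=\Delta(T)$ and let $u_1,\ldots,u_{\Delta(T)}$ be its neighbors. Fix any proper-path coloring $c$ of $T$. For every pair $i\neq j$, the unique $u_i$-$u_j$ path in $T$ is the two-edge path $u_i v u_j$; being a proper path forces $c(vu_i)\neq c(vu_j)$. Hence the $\Delta(T)$ edges incident to $v$ all receive distinct colors, giving $pc(T)\geq \Delta(T)$. Chaining the inequalities
\[
\Delta(T)\;\leq\; pc(T)\;\leq\;\chi'(T)\;=\;\Delta(T)
\]
collapses all three quantities, proving the lemma.

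There is essentially no serious obstacle here; the whole proof rests on the uniqueness of paths in a tree, which simultaneously makes any proper edge-coloring a proper-path coloring (upper bound) and forces the edges at a maximum-degree vertex to be pairwise distinctly colored (lower bound). The only mild subtlety is remembering that ``nontrivial'' means $T$ has at least one edge, so $\Delta(T)\geq 1$ and the statement is meaningful even in the trivial cases of a single edge or a path.
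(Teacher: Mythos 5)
Your proof is correct. The paper does not prove this lemma at all --- it is quoted from the reference [ALLZ] as a known result --- and your argument is the standard one: uniqueness of paths in a tree gives both $pc(T)\le\chi'(T)$ (any proper edge-coloring makes the unique path proper) and $pc(T)\ge\Delta(T)$ (the unique path between two neighbors of a maximum-degree vertex is the two-edge path through it), while $\chi'(T)=\Delta(T)$ follows from K\"onig's theorem. One trivial slip in your optional parenthetical: in the greedy sweep the number of already-colored edges adjacent to the edge from child $u$ to its parent $p$ is bounded by $d_T(p)-1$, not $d_T(u)-1$; this does not affect the proof since the K\"onig citation already suffices.
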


\begin{lem}\cite{ALLZ}\label{lem3}
Let $G$ be a connected graph and $v$ a vertex not in $G$. If $pc(G)=2$, then $pc(G \cup v)=2$ as long as $d(v)\geq 2$, that is, there are at least two edges connecting $v$ to $G$.
\end{lem}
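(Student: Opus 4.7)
The plan is to extend a proper-path $2$-coloring of $G$ to one of $G\cup v$ by assigning colors to the edges newly introduced at $v$. Fix a proper-path $2$-coloring $c$ of $E(G)$, whose existence is guaranteed by $pc(G)=2$, and let $u_1,u_2$ be two distinct neighbors of $v$ in $G$, available because $d(v)\ge 2$. My first attempt is to set $c(vu_1)=1$ and $c(vu_2)=2$, and to color each remaining edge $vu_i$ (if $d(v)>2$) with an arbitrary color, say $1$.

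To verify that this extension is a proper-path coloring of $G\cup v$, I would check that every $w\in V(G)$ is joined to $v$ by a proper path. If $w$ is a neighbor of $v$, the single edge $vw$ is trivially a proper path. Otherwise, since $pc(G)=2$, there exist proper $u_i$-$w$ paths $P_i$ in $G$ for $i\in\{1,2\}$; denote by $\gamma_i$ the color of the first edge of $P_i$. If $\gamma_1=2$, then the concatenation $v,u_1,P_1$ has color sequence $(1,2,\dots)$ and is proper; symmetrically, if $\gamma_2=1$, then $v,u_2,P_2$ has color sequence $(2,1,\dots)$ and is proper.

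The delicate case is a $w$ for which \emph{every} proper $u_1$-$w$ path in $G$ begins with color $1$ and \emph{every} proper $u_2$-$w$ path begins with color $2$, so that neither of the above concatenations yields a proper path. To resolve such a $w$, I would switch to the opposite extension, setting $c(vu_1)=2$ and $c(vu_2)=1$. Under the swapped assignment, the concatenation $v,u_1,P_1$ now has colors $(2,1,\dots)$ and is proper, and likewise $v,u_2,P_2$ has colors $(1,2,\dots)$.

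The main obstacle is to show that the swap does not introduce a new bad vertex $w'$ of the opposite flavor, i.e.\ one for which every proper $u_1$-$w'$ path starts with color $2$ and every proper $u_2$-$w'$ path starts with color $1$. I plan to do this by contradiction: assuming both a bad $w$ (of the first type) and a bad $w'$ (of the swapped type) exist, I would exhibit a proper $u_1$-$u_2$ path $Q$ in $G$ (available since $pc(G)=2$) and trace color parities along $Q$ combined with segments of $P_1,P_2$ for $w$ and $w'$ to produce a proper $u_i$-$w$ or $u_i$-$w'$ path with the forbidden starting color, a contradiction. Once this mutual exclusion is established, at least one of the two extensions gives a proper-path $2$-coloring of $G\cup v$, yielding $pc(G\cup v)=2$.
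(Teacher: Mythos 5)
The paper does not actually prove this lemma --- it is quoted from \cite{ALLZ} --- so there is no in-paper argument to compare against, and your plan must be judged on its own terms. It contains a genuine gap: the ``mutual exclusion'' you intend to establish is false. You restrict attention to the two assignments $\bigl(c(vu_1),c(vu_2)\bigr)=(1,2)$ and $(2,1)$ and claim that at most one of them can be blocked. Take $G=P_6=p_1p_2p_3p_4p_5p_6$ with the (essentially unique) proper-path $2$-coloring, which must alternate $1,2,1,2,1$ along the path, and join $v$ to $u_1=p_2$ and $u_2=p_5$. The only proper $u_1$--$p_1$ path is the edge $p_2p_1$ (start color $1$) and the only proper $u_2$--$p_1$ path is $p_5p_4p_3p_2p_1$ (start color $2$), so $p_1$ blocks the assignment $(1,2)$; symmetrically, the only $u_1$--$p_6$ path starts with color $2$ and the only $u_2$--$p_6$ path starts with color $1$, so $p_6$ blocks $(2,1)$. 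Both of your candidate extensions therefore fail simultaneously, and the contradiction you hope to extract from a proper $u_1$--$u_2$ path cannot exist.

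The repair is not merely cosmetic. You must at least also admit the monochromatic assignments $c(vu_1)=c(vu_2)\in\{1,2\}$: in the example above, giving both new edges color $1$ succeeds, since $v$ reaches $p_1$ by $vp_5p_4p_3p_2p_1$ and $p_6$ by $vp_2p_3p_4p_5p_6$. In general, a vertex $w$ blocks the assignment $(a,b)$ exactly when every proper $u_1$--$w$ path starts with $a$ and every proper $u_2$--$w$ path starts with $b$; each $w$ blocks at most one of the four assignments, but since different vertices may block different assignments, you still owe a proof that the four blocking types cannot all be realized at once (or, failing that, that one may recolor $G$ or use further neighbors of $v$). That missing argument is the actual content of the lemma; as written, the proposal reduces the statement to an unproved and, in the form you state it, false claim.
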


Given a colored path $P = v_1v_2 \ldots v_{s-1}v_s$ between any two vertices $v_1$ and $v_s$, we denote by $start(P)$ the color of the first edge in the path, i.e., $c(v_1v_2)$, and by $end(P)$ the color of the last edge, i.e., $c(v_{s-1}v_s)$. If $P$ is just the edge $v_1v_s$, then $start(P) = end(P) = c(v_1v_s)$.

\begin{df} \cite{BFGMMMT}\label{df1}
Let $c$ be an edge-coloring of $G$ that makes $G$ proper connected. We say
that $G$ has the \emph{strong property} under $c$ if for any pair of vertices $u, v \in V(G)$, there exist two proper paths $P_1, P_2$ from $u$ to $v$ (not necessarily disjoint) such that $start(P_1) \neq start(P_2)$ and $end(P_1) \neq end(P_2)$.
\end{df}

In \cite{BFGMMMT}, Borozan et al.~studied the proper-connection number in 2-connected graphs. Also, they presented a result which improves upon the upper bound $pc(G) \leq \Delta(G)+1$ to the best possible whenever the graph $G$ is bipartite and 2-connected.

\begin{lem}\cite{BFGMMMT}\label{lem4}
Let $G$ be a graph. If $G$ is bipartite and 2-connected, then $pc(G)= 2$ and
there exists a 2-edge-coloring $c$ of $G$ such that $G$ has the strong property under~$c$.
\end{lem}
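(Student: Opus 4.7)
The plan is to induct on an open ear decomposition of $G$. Since $G$ is 2-connected, Whitney's theorem gives $G = C_0 \cup P_1 \cup P_2 \cup \cdots \cup P_t$, where $C_0$ is a cycle and each $P_i$ is a path whose endpoints lie in $G_{i-1} := C_0 \cup P_1 \cup \cdots \cup P_{i-1}$ and whose internal vertices are new. I will build a 2-edge-coloring with the strong property one piece at a time.

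For the base case, $C_0$ sits inside the bipartite graph $G$, so it has even length. Coloring $E(C_0)$ alternately with $1$ and $2$ makes $C_0$ itself the union of two proper walks. For any two vertices $u, v$ on $C_0$, the two arcs of $C_0$ from $u$ to $v$ are proper paths whose first edges at $u$ have different colors and whose last edges at $v$ also have different colors, by strict alternation around the even cycle. Hence $C_0$ enjoys the strong property.

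For the inductive step, assume $G_{i-1}$ has a 2-edge-coloring $c$ with the strong property, and let $P_i = v_0 v_1 \cdots v_\ell$ be the next ear, with $v_1, \ldots, v_{\ell-1}$ new. I would color $E(P_i)$ by one of the two alternating sequences (colors beginning with $1$ or with $2$), and show that for a suitable choice between the two, $G_i = G_{i-1} \cup P_i$ inherits the strong property. The verification splits into three cases for the pair $(u, w)$ to be connected: \emph{(i)} both in $V(G_{i-1})$, which is immediate from the inductive hypothesis; \emph{(ii)} $u$ internal to $P_i$ and $w \in V(G_{i-1})$, where the two required proper paths are the two detours $u \to v_0 \to \cdots \to w$ and $u \to v_\ell \to \cdots \to w$, each continued inside $G_{i-1}$; \emph{(iii)} both internal to $P_i$, where one proper path is the subpath of $P_i$ itself and the other is the long way around through $v_0$ and $v_\ell$.

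The main obstacle is case \emph{(ii)}: I must ensure not only that the two detours start with different colors at $u$ (which is automatic, since the two edges of $P_i$ at $u$ have different colors by alternation) but also that they end at $w$ with different colors. At $v_0$ the continuation into $G_{i-1}$ is forced to start with the color opposite to $c(v_0 v_1)$ to remain proper, and similarly at $v_\ell$, so the inductive strong property pins down the possible end colors at $w$. Because only two colors are in play, the strong property in $G_{i-1}$ couples the pair (start at $v_0$ or $v_\ell$, end at $w$) into one of two patterns, namely $\{(1,1),(2,2)\}$ or $\{(1,2),(2,1)\}$. A short case analysis on these patterns, combined with the parity of $\ell$ (which is forced by whether $v_0$ and $v_\ell$ lie in the same or opposite parts of the bipartition), should show that at least one of the two alternating colorings of $P_i$ simultaneously works for all choices of $u$ and $w$. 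This is where the hypothesis that $G$ is bipartite genuinely enters: it is precisely bipartiteness that makes the parities line up so that one of the two available colorings of $P_i$ always succeeds.
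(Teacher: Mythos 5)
First, a point of context: the paper you are reading does not prove this lemma at all; it is quoted verbatim from the cited source \cite{BFGMMMT}, and the induction on an open ear decomposition that you propose is essentially the approach taken there. So your overall strategy is the right one, the base case is handled correctly, and cases \emph{(i)} and \emph{(iii)} go through as you describe. The problem is the mechanism you propose for closing case \emph{(ii)}, namely ``at least one of the two alternating colorings of $P_i$ works.'' That selection cannot do any work. Write $\alpha_0$ and $\alpha_\ell$ for the colors forced on the first edges of the continuations at $v_0$ and $v_\ell$ (these depend only on the ear coloring, not on $u$). Switching from one alternating coloring of $P_i$ to the other flips $\alpha_0$ and $\alpha_\ell$ \emph{simultaneously}; and since the two guaranteed proper paths from $v_0$ to $w$ (and from $v_\ell$ to $w$) realize the two start colors and the two end colors in a bijective coupling, flipping both forced start colors flips both resulting end colors, so the condition ``the two detours end at $w$ in different colors'' holds for one alternating coloring of the ear if and only if it holds for the other. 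If the abstract strong property of $G_{i-1}$ were all you had, the ``bad'' coupling could occur and neither choice would rescue you, so the case analysis you defer to would not close.

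What actually closes the argument is a stronger, and simpler, observation that you almost state but do not use: with only two colors, a proper path is forced to alternate strictly, so its end color is determined by its start color and the parity of its length; and in a bipartite graph the parity of every $u$--$w$ path is determined by whether $u$ and $w$ lie in the same part. Hence for each fixed pair the coupling between start and end colors is not merely ``one of two patterns'' --- it is a specific, known pattern ($end=start$ if the endpoints are in different parts, $end=\overline{start}$ otherwise), and the strong property reduces to ``both start colors are realized.'' Plugging this in, a direct parity check (on whether $v_0,v_\ell$ lie in the same part, which fixes the parity of $\ell$ and hence the relation between $\alpha_0$ and $\alpha_\ell$) shows the two detours in case \emph{(ii)} automatically end at $w$ in different colors, for \emph{either} alternating coloring of the ear; the same computation supplies the path needed in case \emph{(iii)}. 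So the theorem is true and your skeleton is sound, but you should replace the ``choose the right one of two colorings'' step --- which is vacuous --- by the forced-alternation-plus-bipartite-parity argument.
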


\begin{cor}\cite{BFGMMMT}\label{cor1}
Let $G$ be a graph. If $G$ is 3-connected and noncomplete, then $pc(G)= 2$ and
there exists a 2-edge-coloring $c$ of $G$ such that $G$ has the strong property under $c$.
\end{cor}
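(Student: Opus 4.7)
The plan is to reduce to Lemma~\ref{lem4} via a case analysis on the bipartiteness of $G$.

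\textbf{Case 1:} If $G$ is bipartite, then $G$ is bipartite and $2$-connected (since $\kappa(G)\ge 3\ge 2$), so Lemma~\ref{lem4} directly furnishes a $2$-edge-coloring of $G$ under which $G$ has the strong property.

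\textbf{Case 2:} If $G$ is non-bipartite, the strategy is to find a spanning $2$-connected bipartite subgraph $H \subseteq G$, apply Lemma~\ref{lem4} to $H$ to obtain a $2$-edge-coloring $c_H$ of $H$ with the strong property, and then extend $c_H$ to all of $G$ by assigning each edge of $E(G) \setminus E(H)$ an arbitrary color in $\{1,2\}$. Because $V(H) = V(G)$, every pair $u,v \in V(G)$ is still joined in $G$ by the two proper $uv$-paths guaranteed by the strong property of $(H,c_H)$, so $G$ inherits the strong property under the extended coloring. The construction of $H$ itself proceeds by iteratively deleting edges lying on odd cycles while preserving $2$-connectivity, until no odd cycle remains; the first deletion is permissible because any single edge-deletion from a $3$-connected graph leaves a $2$-connected graph, and subsequent deletions are handled by a Menger-type rerouting argument exploiting the three internally-disjoint paths available between any two vertices of $G$.

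\textbf{Main obstacle:} The chief difficulty lies in Case~2, namely guaranteeing that the iterative edge-deletion procedure can always be continued, with an odd-cycle-edge whose removal preserves $2$-connectivity available at every intermediate stage. Here the hypothesis $\kappa(G)\geq 3$ is indispensable: examples such as $C_5$ show that for $2$-connected non-bipartite graphs the strong property may fail even though $pc=2$, so mere $2$-connectivity is insufficient for the full conclusion of the Corollary, and the extra connectivity is precisely what powers the rerouting argument.
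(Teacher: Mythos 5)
This corollary is quoted from \cite{BFGMMMT}; the present paper gives no proof of it, so your attempt can only be measured against the argument in that source. Your skeleton is the right one and matches it in outline: reduce to Lemma~\ref{lem4} by exhibiting a \emph{spanning} $2$-connected bipartite subgraph $H$, color $H$ with the strong property, extend the coloring arbitrarily to $E(G)\setminus E(H)$, and use noncompleteness for $pc(G)\ge 2$. Case~1 and the extension step are fine, and your observation about $C_5$ (that $2$-connectedness alone cannot give the strong property with two colors) is correct.

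The genuine gap is that the existence of the spanning $2$-connected bipartite subgraph is the entire content of the corollary, and your construction of it is asserted rather than proved. Two concrete problems. First, the ``Menger-type rerouting'' does not survive iteration: the three internally disjoint $u$--$v$ paths live in $G$, and each deleted edge can kill one of them, so after two deletions $3$-connectivity of $G$ tells you nothing about the current intermediate graph, which is by then only $2$-connected. Second, you must rule out the process getting stuck: a $2$-connected non-bipartite graph may have \emph{no} edge on an odd cycle whose removal preserves $2$-connectivity ($C_5$ itself is such a graph, every edge lying on its unique odd cycle and every deletion destroying $2$-connectivity), so you would have to prove that your intermediate graphs never degenerate to such a configuration --- and nothing in the sketch does this. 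The standard repair, and essentially what \cite{BFGMMMT} does, is to work in the other direction: take a bipartite spanning subgraph $H$ with the maximum number of edges, so that $d_H(v)\ge d_G(v)/2\ge 3/2$, hence $\delta(H)\ge 2$ by Theorem~\ref{thm3}, and then use an edge-switching argument (exactly the technique of Claim~1 in the proof of Theorem~\ref{thm6}: any edge of $G$ crossing a would-be cut of $H$ in the ``right'' parity could be added to enlarge $H$) together with $3$-connectivity of $G$ to show $H$ has no cut vertex. Without that argument, or a completed version of your deletion scheme, the proof is incomplete.
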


\begin{lem}\cite{BFGMMMT}\label{lem5}
Let $G$ be a graph. If $G$ is 2-connected, then $pc(G) \le 3$ and there exists
a 3-edge-coloring $c$ of $G$ such that $G$ has the strong property under $c$.
\end{lem}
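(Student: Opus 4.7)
The plan is to prove this by induction on $|E(G)|$ using the open ear decomposition of a $2$-connected graph. Every $2$-connected graph can be built from a single cycle $C_0$ by successively attaching ears $P_1, \ldots, P_k$, where an ear is a path $u_0 u_1 \ldots u_\ell$ ($\ell \ge 1$) with both endpoints in the current subgraph and all interior vertices new; a trivial ear ($\ell = 1$) is a single new chord.

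For the base case $G = C_n$, I color the edges cyclically by $1, 2, 1, 2, \ldots$, and if $n$ is odd I recolor the last edge with color $3$. For any two vertices of $C_n$, the two arcs between them become proper paths with distinct starting and ending colors, so the strong property holds with at most $3$ colors (indeed with $2$ when $n$ is even).

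For the inductive step, let $G'$ be the $2$-connected subgraph present before attaching the ear $P = u_0 u_1 \ldots u_\ell$, and suppose $G'$ carries a $3$-edge-coloring $c'$ with the strong property. If $\ell = 1$, assign any color to the new edge $u_0 u_\ell$: all strong-property witnesses for vertex pairs in $G'$ still exist in $G$, and no new vertex is added. If $\ell \ge 2$, first properly color the interior edges $u_1 u_2, \ldots, u_{\ell-2} u_{\ell-1}$; this leaves two admissible colors for each of the two end-edges $e_0 := u_0 u_1$ and $e_1 := u_{\ell-1} u_\ell$. Let $Q_1, Q_2$ be two proper $u_0$-$u_\ell$ paths in $G'$ with $start(Q_1) \ne start(Q_2)$ and $end(Q_1) \ne end(Q_2)$, guaranteed by the strong property of $c'$. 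A short case analysis (the admissible color set either contains a color outside the two forbidden values, or coincides with them, in which case the matched pairing $(start(Q_i), end(Q_i))$ still avoids both constraints) shows I can choose $c(e_0)$ and $c(e_1)$ so that the concatenation $P \cup Q_i$ is a proper closed walk for both $i = 1$ and $i = 2$.

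With these two proper closed walks on every internal vertex of $P$, I verify the strong property for each new pair type. For two internal vertices of $P$, the two arcs of the closed walk yield proper paths whose starts and ends are the two distinct edge colors incident to each endpoint in $P$. For a pair $(u_a, x)$ with $u_a$ internal and $x \in V(G') \setminus \{u_0, u_\ell\}$, I route one path via $u_0$ (using a strong-property path from $u_0$ to $x$ in $G'$ whose start avoids $c(e_0)$) and the other via $u_\ell$; the starts at $u_a$ are automatically distinct, and the two different end colors at $x$ provided by the strong property of $c'$ can be exploited to make the ends distinct as well. Strong property for pairs of vertices in $G'$ is inherited directly. The main obstacle is the case analysis in the inductive step: one must carefully juggle the two degrees of freedom at each end-edge of the ear against the two start/end options from the strong property in $G'$, and then check every pair type involving a new vertex. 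The combinatorics is finite and elementary, but needs systematic treatment to avoid overlooking a configuration.
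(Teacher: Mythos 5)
This lemma is imported by the paper from \cite{BFGMMMT} and is not proved here, so there is no in-paper argument to compare with; the ear-decomposition induction you propose is the natural (and, in essence, the originally intended) route, and your base case and trivial-ear step are fine. The nontrivial-ear step, however, contains a concrete false claim. You assert that $c(e_0)$ and $c(e_1)$ can be chosen so that $P\cup Q_i$ is a proper closed walk for \emph{both} $i=1,2$. Properness at $u_0$ for both walks forces $c(e_0)\notin\{start(Q_1),start(Q_2)\}$ and properness at $u_\ell$ forces $c(e_1)\notin\{end(Q_1),end(Q_2)\}$; since the starts are distinct and the ends are distinct, each condition leaves exactly one admissible colour. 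Take an ear of length $\ell=2$ with $(start(Q_1),end(Q_1))=(1,2)$ and $(start(Q_2),end(Q_2))=(2,1)$: you are forced to set $c(e_0)=c(e_1)=3$, but $e_0$ and $e_1$ meet at $u_1$, so no path through $u_1$ is then proper. The ``matched pairing'' remark does not rescue this --- if $c(e_0)=start(Q_1)$ then $P\cup Q_1$ simply fails to be proper at $u_0$. (One working $Q_i$ is always obtainable and suffices for pairs of internal vertices, but your later verification explicitly leans on having both.)

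The deeper gap is the pair type $(u_a,x)$ with $u_a$ internal and $x\in V(G')\setminus\{u_0,u_\ell\}$. Since $u_a$ has degree $2$, one witness must leave the ear through $u_0$ and the other through $u_\ell$. The strong property of $c'$ gives two proper $u_0$--$x$ paths with distinct starts and distinct ends; after discarding the one whose start equals $c(e_0)$ you may be left with exactly one, whose end colour at $x$ is then forced, say $t$. Symmetrically the path through $u_\ell$ may have its end colour forced to some $t'$, and nothing in the inductive hypothesis prevents $t=t'$, in which case your two paths have equal ends at $x$ and the strong property is not established. This cannot in general be repaired by tuning $c(e_0),c(e_1)$, because different vertices $x$ impose conflicting requirements on the same two colours. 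Closing this step requires either a strictly stronger inductive invariant than the strong property or a different construction; note that the present paper itself points out that the ear-decomposition arguments of \cite{BFGMMMT} for the closely related bridgeless case lacked details and needed the later rigorous treatment of \cite{HLW} --- the places where your sketch waves its hands are exactly the places where that care is required.
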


\begin{lem}\cite{HLW1}\label{lem6}
Let $H = G \cup \{v_1\} \cup \{v_2\}$. If there is a proper-path $k$-coloring $c$ of $G$
such that $G$ has the strong property under $c$, then $pc(H) \leq k$ as long as $v_1, v_2$ are not isolated vertices of $H$.
\end{lem}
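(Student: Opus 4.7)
The plan is to extend the given coloring $c$ of $G$ to a proper-path $k$-coloring of $H$ by assigning one of the existing $k$ colors to each new edge (those incident with $v_1$ or $v_2$). Since $c$ already provides proper paths between every pair of vertices of $G$, it suffices to produce proper paths (i) from each $v_i$ to every vertex of $V(G)$ and (ii) between $v_1$ and $v_2$.

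I would split the argument according to the adjacency between $v_1$ and $v_2$ and the existence of common neighbors in $G$. In the straightforward cases---namely, when $v_1v_2 \in E(H)$ and both $v_1, v_2$ have neighbors in $G$ (choose $u_i \in N_G(v_i)$ and set $c(v_1u_1)=c(v_2u_2)=1$, $c(v_1v_2)=2$); when $v_1v_2 \in E(H)$ but only one of them, say $v_2$, has a neighbor $u$ in $G$ (set $c(v_1v_2)=1$ and $c(v_2u)=2$); and when $v_1v_2 \notin E(H)$ with a common neighbor $u \in N_G(v_1)\cap N_G(v_2)$ (set $c(v_1u)=1$ and $c(v_2u)=2$)---every required proper path can be built by prepending one or two new edges to a proper path supplied directly by the strong property of $c$ on $G$. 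For (i), once $v_i$ has a neighbor $u_i \in V(G)$ with edge-color $c(v_iu_i)=\alpha$, the strong property gives, for any $w \in V(G)\setminus\{u_i\}$, a proper $u_i$-$w$ path whose starting color is not $\alpha$, and so $v_iu_iP$ is a proper $v_i$-$w$ path; the analogous routing through $v_2$ handles $v_1$ when $v_1$ has no $G$-neighbor. All remaining new edges may be colored arbitrarily, since none of the proper paths constructed above use them.

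The main obstacle is the remaining subcase: $v_1v_2 \notin E(H)$ and $v_1, v_2$ have no common neighbor in $G$. Fixing $u_1 \in N_G(v_1)$ and $u_2 \in N_G(v_2)$ with $u_1 \neq u_2$, any proper $v_1$-$v_2$ path in $H$ must be of the form $v_1u_1Pu_2v_2$ for some proper $u_1$-$u_2$ path $P$ in $G$, and therefore must simultaneously satisfy $start(P) \neq c(v_1u_1)$ and $end(P) \neq c(v_2u_2)$. The strong property only guarantees two proper $u_1$-$u_2$ paths $P_1, P_2$ with different starts and different ends, which does not by itself supply one path avoiding two prescribed colors at once. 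The key idea I would exploit is that we still control both colors $c(v_1u_1)$ and $c(v_2u_2)$: fix $c(v_1u_1)$ arbitrarily; since $start(P_1) \neq start(P_2)$, at least one $P_i$, say $P_1$, has $start(P_1) \neq c(v_1u_1)$; then, since the strong property forces $k \geq 2$, choose $c(v_2u_2)$ distinct from $end(P_1)$, so that $v_1u_1P_1u_2v_2$ is a proper $v_1$-$v_2$ path. Verifying that connections of type (i) still work under this choice is immediate from the same strong-property argument as in the easy cases, completing the proof.
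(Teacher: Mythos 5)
Your proof is correct. The paper itself does not prove this lemma --- it is quoted from \cite{HLW1} without proof --- so there is no in-paper argument to compare against, but your case analysis is the standard one for results of this type: the easy cases reduce to prepending one or two new edges to a path supplied by the strong property, and you correctly isolate the only delicate case ($v_1v_2\notin E(H)$ with $N_G(v_1)\cap N_G(v_2)=\emptyset$) and resolve it by exploiting the freedom to choose $c(v_2u_2)$ after the path $P_1$ has been fixed. The only nitpick is the sentence claiming every proper $v_1$--$v_2$ path ``must be of the form $v_1u_1Pu_2v_2$'' --- this is false when $v_1$ or $v_2$ has several neighbours in $G$ --- but it is harmless since you only need to exhibit one such path, not characterize all of them.
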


In \cite{BFGMMMT}, they also studied the proper connection number for 2-edge-connected graphs by (closed) ear-decomposition. But they did not give the details of the proof. In \cite{HLW}, Huang et al.~gave a rigorous  proof for the result on 2-edge-connected graphs depending on Lemmas \ref{lem4} and \ref{lem5}.

\begin{lem} \cite{HLW}\label{lem7}
If $G$ is a connected bridgeless graph with $n$ vertices, then $pc(G) \leq 3$.
Furthermore, there exists a 3-edge-coloring $c$ of $G$ such that $G$ has the strong property
under $c$.
\end{lem}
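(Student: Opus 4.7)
\bigskip

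\noindent\textbf{Proof proposal.} The plan is to use an ear decomposition of $G$. Since $G$ is connected and bridgeless, it is 2-edge-connected, hence admits an ear decomposition
$G = C \cup P_1 \cup P_2 \cup \cdots \cup P_k$, where $C$ is a cycle and each $P_j$ is either an open ear (a path whose endpoints lie in $G_{j-1} := C \cup P_1 \cup \cdots \cup P_{j-1}$ but whose internal vertices are new to $G_{j-1}$) or a closed ear (a cycle meeting $G_{j-1}$ in exactly one vertex). I will induct on $k$.

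For the base case $k = 0$, $G$ is a cycle, which is 2-connected, so Lemma~\ref{lem5} directly supplies a 3-edge-coloring with the strong property (if $C$ is an even cycle, Lemma~\ref{lem4} already gives a 2-coloring with the strong property). For the inductive step, suppose $G_{j-1}$ carries a 3-edge-coloring $c$ with the strong property; I will extend $c$ over the edges of $P_j$. Write $u,v$ for the attachment vertices of $P_j$ on $G_{j-1}$ (with $u=v$ for a closed ear), and let $e_u,e_v$ denote the ear-edges incident to $u,v$. If $P_j$ is a single edge, any color works, because every pair of old vertices retains its two strong-property paths in $G_{j-1}$. If $P_j$ has length at least two, I color the interior edges alternately using two of the three colors, reserving the third color as a ``joker'' to be placed on at most one interior edge when parity forces it, and I choose the colors of $e_u,e_v$ so that, together with the strong-property coloring at $u$ and $v$, the internal vertices can reach $u$ and $v$ via two proper paths with distinct starts and distinct ends. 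A closed ear is handled analogously, treating it as a cycle attached at the single vertex $u$ and using the freedom in $c$ at $u$.

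To verify the strong property on $G_j$, one checks three kinds of pairs: old--old pairs are handled by the inherited strong-property paths in $G_{j-1}$; for any internal vertex $x$ of $P_j$ and any old $y$, the two directions along the ear from $x$ to $u$ and from $x$ to $v$ yield proper sub-paths whose terminal colors differ from $c(e_u),c(e_v)$ by construction, and these sub-paths can be extended using the strong-property paths in $G_{j-1}$ from $u,v$ to $y$ (whose start colors can be chosen freely from two options by the strong property) so as to produce two proper $x$-$y$ paths with distinct starts and distinct ends; pairs of internal vertices of $P_j$ are handled similarly, using the two arcs from their positions along the ear, possibly routed through $G_{j-1}$.

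The main obstacle is the bookkeeping in the extension step: one must simultaneously control the colors of $e_u$ and $e_v$, the parity of the ear length, and the placement of the ``joker'' color so that, at both endpoints, the concatenation with strong-property paths in $G_{j-1}$ does not create a monochromatic adjacency. This requires a case analysis on ear length modulo $2$ and on which pairs of colors are available as starts for strong-property paths from $u$ and from $v$ in $G_{j-1}$. Once this case analysis is carried out, the induction closes and delivers a 3-edge-coloring of $G$ with the strong property.
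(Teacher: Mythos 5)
You should first be aware that the paper does not prove Lemma~\ref{lem7} at all: it is imported from \cite{HLW}, and the authors explicitly recall that the (closed) ear-decomposition argument you propose is exactly the route sketched \emph{without details} in \cite{BFGMMMT}, while the rigorous proof in \cite{HLW} instead rests on Lemmas~\ref{lem4} and~\ref{lem5}, i.e., it works block by block (every block of a connected bridgeless graph is $2$-connected, so Lemma~\ref{lem5} supplies a strong-property $3$-coloring of each block) and then verifies the strong property across the cut vertices of the block tree by matching up the two guaranteed paths on each side of a cut vertex. So your approach is genuinely different from the one the paper relies on, and, as written, it reproduces the very gap that made the original sketch non-rigorous.

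The gap is that the entire substance of the argument is deferred to an unexecuted case analysis. Concretely: for an internal vertex $x$ of an open ear $P_j$ and an old vertex $y$, your two candidate $x$--$y$ paths leave $x$ along the two ear edges (distinct starts, fine), reach $u$ and $v$, and are then continued by strong-property paths from $u$ to $y$ and from $v$ to $y$ whose start colors avoid $c(e_u)$ and $c(e_v)$ respectively. The strong property of $G_{j-1}$ only guarantees, for each of the pairs $(u,y)$ and $(v,y)$ \emph{separately}, two paths with distinct starts and distinct ends; nothing forces the two continuations you are able to select to have distinct end colors at $y$, which is what the strong property of the pair $(x,y)$ in $G_j$ requires. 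The closed-ear case is worse: both $x$--$y$ paths must pass through the single attachment vertex $u$, so you must simultaneously enforce properness at $u$ for both continuations and distinctness of ends at $y$ using only the two paths supplied by the strong property at $(u,y)$; this imposes constraints on the colors of the two ear edges at $u$ that interact with the alternation, the parity of the ear, and the placement of the third color, and you also still owe the pairs consisting of two internal ear vertices and the pair $(x,u)$ itself. You name this bookkeeping as ``the main obstacle'' and then assert it can be carried out, but resolving it is the proof; until it is done, the induction does not close.
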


\begin{lem} \cite{BFGMMMT,HLW}\label{lem8}
If $G$ is a bipartite connected bridgeless graph with $n$ vertices, then $pc(G) \leq
2$. Furthermore, there exists a 2-edge-coloring $c$ of $G$ such that $G$ has the strong property under $c$.
\end{lem}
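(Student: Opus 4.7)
The plan is to induct on the number of blocks of $G$, with Lemma~\ref{lem4} serving as the atomic building block. Since $G$ is connected and bridgeless, every block of $G$ is 2-connected: a block is either a maximal 2-connected subgraph or a single-edge bridge, and the second possibility is ruled out. Each block, being a subgraph of the bipartite graph $G$, is itself bipartite, so Lemma~\ref{lem4} applies to each block and yields a 2-edge-coloring with the strong property on that block.

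The base case is when $G$ has a single block, in which case $G$ is 2-connected bipartite and Lemma~\ref{lem4} finishes the argument. For the inductive step, assume $G$ has at least two blocks, pick a leaf block $B$ of the block tree, and let $v$ be the unique cut vertex at which $B$ meets the rest of $G$. Put $G' = G - (V(B) \setminus \{v\})$. The block decomposition of $G'$ is obtained from that of $G$ by deleting $B$, so $G'$ remains connected, bipartite, and bridgeless (every edge of $G'$ still lies inside a 2-connected block of $G'$). By the induction hypothesis, $G'$ carries a 2-edge-coloring $c'$ with the strong property, and by Lemma~\ref{lem4}, $B$ carries a 2-edge-coloring $c_B$ with the strong property. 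Using the common palette $\{1,2\}$, define $c$ on $G$ to coincide with $c'$ on $E(G')$ and with $c_B$ on $E(B)$; the two edge sets are disjoint, so $c$ is well defined.

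The main work is to verify that $c$ has the strong property on $G$. For a pair $u,w$ that lies entirely in $V(G')$, or entirely in $V(B)$, the two required proper paths are supplied by $c'$ or $c_B$ directly. The nontrivial case is $u \in V(B) \setminus \{v\}$ and $w \in V(G') \setminus \{v\}$: every $u$-$w$ walk in $G$ must traverse $v$. The strong property of $c_B$ gives two proper $u$-$v$ paths $P_1, P_2$ in $B$ with $\mathrm{start}(P_1)\neq \mathrm{start}(P_2)$ and $\mathrm{end}(P_1)\neq \mathrm{end}(P_2)$; similarly $c'$ gives two proper $v$-$w$ paths $Q_1,Q_2$ in $G'$ with distinct starts and distinct ends. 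Since $\{\mathrm{end}(P_1),\mathrm{end}(P_2)\} = \{1,2\} = \{\mathrm{start}(Q_1),\mathrm{start}(Q_2)\}$, each $P_i$ pairs with a unique $Q_{j(i)}$ whose start color differs from $\mathrm{end}(P_i)$; the concatenations $P_iQ_{j(i)}$ are proper at $v$, hence proper $u$-$w$ paths. A direct check shows the map $i \mapsto j(i)$ is a bijection, so the two concatenations inherit distinct starts from $P_1,P_2$ and distinct ends from $Q_1,Q_2$, delivering the strong property for $(u,w)$.

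The main obstacle is precisely this concatenation in the mixed case: one must track start and end colors simultaneously and verify that the pairing forced by the ``proper at $v$'' requirement still produces two combined paths with distinct starts and distinct ends. It is here that the full strong property, rather than mere proper connectedness, of both $c'$ and $c_B$ is essential, for without the two-sided flexibility at $v$ the induction would not propagate through multiple cut vertices.
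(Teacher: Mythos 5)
The paper does not prove Lemma~\ref{lem8}; it is quoted from \cite{BFGMMMT,HLW}, and the surrounding text only records that the rigorous proof in \cite{HLW} ``depends on Lemmas~\ref{lem4} and~\ref{lem5}.'' Your argument is correct and is essentially that proof: since $G$ is bridgeless, every block is $2$-connected and bipartite, Lemma~\ref{lem4} colors each block with the strong property, and the induction over leaf blocks of the block tree glues these colorings. The one step that deserves the care you gave it is the mixed case at a cut vertex $v$: because the two $u$--$v$ paths have \emph{distinct} end colors they exhaust $\{1,2\}$, so the pairing $i\mapsto j(i)$ forced by properness at $v$ is a bijection, and the concatenations (which are genuine paths, since $B$ and $G'$ meet only in $v$) inherit distinct starts from the $P_i$ and distinct ends from the $Q_{j(i)}$. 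This is exactly why the full strong property, not just proper connectedness, must be carried through the induction, and your write-up handles it correctly. (The original route in \cite{BFGMMMT} via closed ear decompositions is an alternative, but your block-tree induction is the cleaner and the one \cite{HLW} makes rigorous.)
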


They also stated a very useful result for graphs with cut-edges.

\begin{lem}\cite{HLW}\label{lem9}
Let $G$ be a graph with a cut-edge $v_1v_2$, and $G_i$ be the connected graph
obtained from $G$ by contracting the connected component containing $v_i$ of $G-v_1v_2$ to a
vertex $v_i$, where $i = 1, 2$. Then $pc(G) = \max\{pc(G_1), pc(G_2)\}$.
\end{lem}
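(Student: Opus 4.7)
The plan is to prove the equality $pc(G)=\max\{pc(G_1),pc(G_2)\}$ via two separate inequalities. Let $C_i$ denote the component of $G-v_1v_2$ containing $v_i$, so that $G_i$ consists of $C_{3-i}$ together with a single pendant vertex $v_i$ attached to $v_{3-i}$ by the edge $v_1v_2$, for $i=1,2$.

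For the lower bound $pc(G_i)\le pc(G)$, I would start from an optimal proper-path coloring $c$ of $G$ using $pc(G)$ colors and simply restrict $c$ to the edges of $G_1$, namely $E(C_2)\cup\{v_1v_2\}$. The key observation is that because $v_1v_2$ is a cut-edge of $G$, every simple path in $G$ traverses it at most once. Hence for $u,v\in V(C_2)$ any proper $u$-$v$ path in $G$ stays inside $C_2$ and remains a proper path in $G_1$; and for $u\in V(C_2)$ and the pendant $v_1$ of $G_1$, any proper $u$-$v_1$ path in $G$ must terminate at $v_1$ immediately after crossing the cut-edge (since $v_1$ is an endpoint and cannot be revisited), so it consists of a proper $u$-$v_2$ subpath inside $C_2$ followed by the edge $v_1v_2$, which again lies in $G_1$. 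The argument for $G_2$ is symmetric.

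For the upper bound $pc(G)\le\max\{pc(G_1),pc(G_2)\}$, set $k=\max\{pc(G_1),pc(G_2)\}$ and pick proper-path colorings $c_1$ of $G_1$ and $c_2$ of $G_2$, each using at most $k$ colors from $\{1,\ldots,k\}$. Since relabelling colors preserves the proper-path property, I may assume $c_1(v_1v_2)=c_2(v_1v_2)=:\alpha$. I then define a coloring $c$ of $G$ by $c=c_1$ on $E(C_2)\cup\{v_1v_2\}$ and $c=c_2$ on $E(C_1)$; this uses at most $k$ colors. To verify proper-connectedness, I split into cases: if $u,v\in V(C_i)$, the proper $u$-$v$ path supplied by $c_i$ inside $G_i$ cannot pass through the pendant $v_{3-i}$ (degree one in $G_i$), so it lies in $C_i$ and is a proper path in $G$; if $u\in V(C_1)$ and $v\in V(C_2)$, I concatenate the proper $u$-to-$v_2$ path given by $c_2$ in $G_2$ (ending with the cut-edge, colored $\alpha$) with the proper $v_1$-to-$v$ path given by $c_1$ in $G_1$ (beginning with the cut-edge, colored $\alpha$) by identifying the shared edge $v_1v_2$.

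The main technical point, and what I expect to be the principal obstacle, is confirming properness of the concatenated path at the junction $v_1v_2$. This turns out to be automatic because both pieces are themselves proper paths, so the edges immediately flanking $v_1v_2$ in the combined walk have colors different from $\alpha$; however, the preliminary color-permutation step is essential so that the shared cut-edge receives a single well-defined color, and one must also track that no vertex is repeated in the concatenation (which is ensured since the two pieces lie in vertex-disjoint sides of the cut apart from the endpoints of the cut-edge itself).
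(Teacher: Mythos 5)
Lemma~\ref{lem9} is quoted from \cite{HLW} and the paper contains no proof of it, so there is nothing internal to compare against; judged on its own, your argument is correct and complete. Both directions are sound: the restriction argument for $pc(G_i)\le pc(G)$ correctly uses the fact that a simple path cannot recross the bridge, and the gluing argument for $pc(G)\le\max\{pc(G_1),pc(G_2)\}$ correctly handles the only delicate points (permuting colors so the bridge gets one color, properness at the junction, and simplicity of the concatenated path). This is the standard proof of the lemma.
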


\section{Proof of Theorem \ref{th1}}

At the beginning of this section, we list some useful results as follows.

\begin{thm}\cite{BM}\label{thm2}
Every loopless graph $G$ contains a bipartite spanning subgraph $H$ such that $d_H(v) \geq \frac{1}{2}d_G(v)$ for all $v \in V$.
\end{thm}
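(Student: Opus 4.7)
The plan is to use a classical extremal/switching argument, choosing a bipartition that cannot be locally improved and showing the resulting bipartite subgraph automatically satisfies the required degree inequality.

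First I would consider all partitions $(A,B)$ of $V(G)$ into two disjoint parts, and pick one that maximizes $|E_G(A,B)|$; since $V(G)$ is finite such a maximum exists. Let $H$ be the spanning bipartite subgraph of $G$ with bipartition $(A,B)$ and edge set $E_G(A,B)$. By construction $H$ is a bipartite spanning subgraph of $G$, so it remains only to verify the per-vertex degree condition.

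For this I would argue by contradiction. Suppose some vertex $v$ satisfies $d_H(v) < \tfrac{1}{2}d_G(v)$, and without loss of generality assume $v \in A$. The neighbors of $v$ in $G$ split into those in $B$ (contributing $d_H(v)$ edges to the cut) and those in $A$ (contributing nothing). The assumed inequality yields more than $\tfrac{1}{2}d_G(v)$ neighbors of $v$ inside $A$. Now form the new partition $(A \setminus \{v\},\, B \cup \{v\})$; this operation removes $d_H(v)$ crossing edges (the old neighbors of $v$ in $B$) and adds $d_G(v)-d_H(v)$ new crossing edges (the neighbors of $v$ in $A$). The net change is $d_G(v)-2d_H(v) > 0$, contradicting the maximality of $|E_G(A,B)|$.

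There is essentially no obstacle here: the argument is a two-line switching trick once the right bipartition is chosen, and loopless-ness is only used to ensure that a loop at $v$ would not create a self-edge that cannot be routed across the cut (so that every edge incident to $v$ really does get reclassified correctly under the switch). The main thing to be careful about when writing the full proof is just to state clearly the effect of moving $v$ on each of the two edge-counts, so that the strict inequality $d_G(v)-2d_H(v) > 0$ is visibly incompatible with the extremality of the chosen partition.
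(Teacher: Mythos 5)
Your proof is correct and is exactly the standard max-cut switching argument that the paper relies on: indeed, the paper's Theorem~\ref{thm3} makes explicit that any bipartite spanning subgraph with the greatest possible number of edges works, which is precisely the bipartition you choose. The only cosmetic point is that your aside about looplessness could be stated more plainly (a loop at $v$ contributes to $d_G(v)$ but can never lie across the cut, so the inequality could fail for graphs with loops), but this does not affect the validity of the argument.
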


In fact, the process of the proof of Theorem \ref{thm2} implies the following stronger result.
\begin{thm}\cite{BM}\label{thm3}
Let $G$ be a loopless graph. Then each bipartite spanning subgraph $H$ of $G$ with the greatest possible number of edges satisfies $d_H(v) \geq \frac{1}{2}d_G(v)$ for all $v \in V$.
\end{thm}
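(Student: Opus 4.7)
The plan is to prove the statement by a straightforward local-switching argument on a maximum bipartite spanning subgraph. Let $H$ be a bipartite spanning subgraph of $G$ with the greatest possible number of edges, and fix a bipartition $(X,Y)$ of $V(G)$ realizing $H$, so that $E(H) = E_G(X,Y)$. For any vertex $v \in V(G)$, the edges of $H$ incident to $v$ are precisely those edges of $G$ from $v$ to the opposite side of the bipartition; thus if $v \in X$, then $d_H(v) = |N_G(v)\cap Y|$, and similarly if $v \in Y$.

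I would argue by contradiction. Suppose some vertex $v$ satisfied $d_H(v) < \tfrac{1}{2}d_G(v)$. By symmetry assume $v \in X$, so $d_H(v) = |N_G(v)\cap Y|$ and consequently
\[
|N_G(v)\cap X| \;=\; d_G(v) - d_H(v) \;>\; d_H(v) \;=\; |N_G(v)\cap Y|,
\]
where we use that $G$ is loopless so $v \notin N_G(v)$ and every neighbor of $v$ lies in $X\cup Y\setminus\{v\}$. Now form a new bipartition by moving $v$ across: set $X' = X\setminus\{v\}$ and $Y' = Y\cup\{v\}$, and let $H'$ be the associated bipartite spanning subgraph with edge set $E_G(X',Y')$.

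Compare edge counts. Edges of $G$ not incident to $v$ contribute the same to $H$ and to $H'$. The edges incident to $v$ that lie in $H'$ are exactly those going from $v$ to $X' = X\setminus\{v\}$, i.e., the $|N_G(v)\cap X|$ edges, whereas those in $H$ were the $|N_G(v)\cap Y|$ edges. By the displayed inequality we obtain $|E(H')| > |E(H)|$, contradicting the maximality of $H$. Therefore $d_H(v) \geq \tfrac{1}{2}d_G(v)$ holds for every $v\in V(G)$, as required.

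This argument really has no serious obstacle; the only point deserving care is the loopless hypothesis, which is what guarantees that when $v$ switches sides, every neighbor of $v$ still sits in the opposite part relative to $v$ in the new bipartition (no self-loop gets miscounted). I would mention explicitly that Theorem \ref{thm2} is just the existential version obtained by applying this switching step starting from any bipartite spanning subgraph and iterating until no improvement is possible, so Theorem \ref{thm3} is the natural strengthening encoded directly by the optimality of $H$.
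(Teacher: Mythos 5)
Your proof is correct and is exactly the standard vertex-switching argument that the paper invokes from Bondy--Murty: the paper gives no proof of its own, noting only that the proof of Theorem~\ref{thm2} (which is this same switching/local-improvement argument) yields the statement, so your write-up matches the intended proof. The only step worth making fully explicit is why a maximum bipartite spanning subgraph $H$ satisfies $E(H)=E_G(X,Y)$ for a bipartition $(X,Y)$ witnessing that $H$ is bipartite; this follows since $E(H)\subseteq E_G(X,Y)$ and $E_G(X,Y)$ is itself the edge set of a bipartite spanning subgraph, so maximality forces equality.
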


\begin{thm}\cite{Wi}\label{thm4}
Let $G$ be a graph with $n$ vertices. If $\delta(G)\geq \frac{n-1}{2}$, then $G$ has a Hamilton path. Moreover, if $\delta(G)\geq \frac{n}{2}$, then $G$ has a Hamilton cycle. Also, if $\delta(G)\geq \frac{n+1}{2}$, then $G$ is Hamilton-connected.
\end{thm}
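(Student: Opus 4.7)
The plan is to prove the three assertions in sequence, using each as a stepping stone for the next. For the Hamilton cycle claim ($\delta(G) \geq n/2$), I would use the classical Dirac longest-path argument. First note that $G$ is connected, because any component of order at most $n/2$ would contain a vertex of degree strictly less than $n/2$. Let $P = v_1 v_2 \ldots v_k$ be a longest path of $G$; by maximality, $N(v_1) \cup N(v_k) \subseteq V(P)$. Set $A = \{i \in \{1,\ldots,k-1\} : v_1 v_{i+1} \in E(G)\}$ and $B = \{i \in \{1,\ldots,k-1\} : v_i v_k \in E(G)\}$, so $|A| = d(v_1)$ and $|B| = d(v_k)$. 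Since $|A|+|B| \geq n$ but $|A \cup B| \leq k-1 \leq n-1$, the intersection $A \cap B$ is nonempty; any index $i$ in it yields the cycle $v_1 v_{i+1} v_{i+2} \ldots v_k v_i v_{i-1} \ldots v_2 v_1$ on the vertex set of $P$. If $k < n$, connectivity provides a vertex $y \notin V(P)$ with a neighbor on this cycle, and splicing $y$ in produces a path longer than $P$, a contradiction. So $k = n$.

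For the Hamilton path claim ($\delta(G) \geq (n-1)/2$), I would reduce to the Dirac case by attaching a universal vertex: let $G' = G + w$ where $w$ is adjacent to every vertex of $G$. Then $|V(G')| = n+1$ and $\delta(G') \geq (n-1)/2 + 1 = (n+1)/2 = |V(G')|/2$, so the first part applied to $G'$ supplies a Hamilton cycle; deleting $w$ from it leaves a Hamilton path of $G$.

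For Hamilton-connectedness ($\delta(G) \geq (n+1)/2$), fix any two vertices $u, v \in V(G)$ and form $G^*$ by adding a new vertex $w$ with $N_{G^*}(w) = \{u, v\}$. In $G^*$ the vertex $w$ has degree $2$, both $u$ and $v$ have degree at least $(n+3)/2$, and every other vertex has degree at least $(n+1)/2$. Arranging the $n+1$ degrees in non-decreasing order as $d_1 \leq \ldots \leq d_{n+1}$, we have $d_1 = 2$ and $d_i \geq (n+1)/2$ for $i \geq 2$. Chv\'atal's sufficient condition for a Hamilton cycle is easily checked here: $d_1 = 2 > 1$ handles $i = 1$, and for each $2 \leq i < (n+1)/2$ we have $d_i \geq (n+1)/2 > i$. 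Hence $G^*$ has a Hamilton cycle, which must use both edges $wu$ and $wv$ since these are the only edges at $w$. Deleting $w$ from the cycle returns the desired Hamilton $u$-$v$ path in $G$.

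The main obstacle I anticipate is the third step: Dirac's longest-cycle argument has no built-in mechanism for selecting a Hamilton cycle through two prescribed vertices, and the naive reduction of adding the edge $uv$ fails because Dirac may return a cycle not using that edge. The attachment of a degree-$2$ vertex $w$ forces any Hamilton cycle to visit $u$ and $v$ consecutively, but at the cost of a single very low-degree vertex in $G^*$; this is precisely why the stronger threshold $\delta(G) \geq (n+1)/2$ is required, and why one must invoke Chv\'atal's degree-sequence theorem (or, equivalently, the Bondy--Chv\'atal closure) rather than Dirac's bound when handling $G^*$.
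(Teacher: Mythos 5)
Your proposal is correct. Note that the paper offers no proof of this statement at all: it is imported as a known result with a citation to Williamson (and the three parts are in fact the classical theorems of Dirac and Ore/Erd\H{o}s--Gallai on minimum degree and Hamiltonicity), so there is no in-paper argument to compare against. Your write-up supplies a complete, self-contained proof along the standard lines: the longest-path/crossing-chords argument for the Dirac bound $\delta\geq n/2$, the reduction of the Hamilton-path case to Dirac by adjoining a universal vertex, and the reduction of Hamilton-connectedness to Hamiltonicity of $G^*=G+w$ with $N(w)=\{u,v\}$, where the single degree-$2$ vertex forces an appeal to Chv\'atal's degree-sequence condition rather than Dirac's theorem; your verification of Chv\'atal's condition ($d_1=2>1$ and $d_i\geq\frac{n+1}{2}>i$ for $2\leq i<\frac{n+1}{2}$) is accurate, and the forced passage of the Hamilton cycle through both edges at $w$ correctly yields the $u$--$v$ Hamilton path. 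The only (standard and harmless) caveats are the implicit assumptions $n\geq 3$ for the cycle statement and that the longest path has at least three vertices, which follow from the degree hypotheses in all nontrivial cases.
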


\begin{thm}\cite{Wi}\label{thm5}
Let $G$ be a graph with $n$ vertices. If $\delta(G)\ge \frac{n+2}{2}$, then $G$ is panconnected meaning that, between any pair of vertices in $G$, there is a path of every length from 2 to $n-1$.
\end{thm}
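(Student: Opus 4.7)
My plan is to combine the Hamilton-connectedness already provided by Theorem~\ref{thm4} with an inductive vertex-insertion argument that lengthens a $u$-$v$ path by one, treating the two extreme path-lengths $\ell=2$ and $\ell=n-1$ separately from the intermediate ones.

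For the extremes, the hypothesis $\delta(G)\ge(n+2)/2$ exceeds the $(n+1)/2$ bound required by Theorem~\ref{thm4}, so $G$ is Hamilton-connected and any two fixed vertices $u,v$ admit a path of length $n-1$. For $\ell=2$, inclusion--exclusion on $V(G)\setminus\{u,v\}$ yields
\[
|N(u)\cap N(v)|\ge |N(u)|+|N(v)|-(n-2)\ge (n+2)-(n-2)=4>0,
\]
so a common neighbour $w$ produces the length-$2$ path $uwv$.

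For intermediate lengths $2<\ell<n-1$ I would induct upwards on $\ell$. Given a $u$-$v$ path $P=u_0u_1\cdots u_\ell$ with $\ell<n-1$, there is at least one vertex $x\notin V(P)$; the goal is to build a $u$-$v$ path of length $\ell+1$ by inserting $x$. The easy case is when some consecutive pair $u_i,u_{i+1}$ both lie in $N(x)$, since then replacing the edge $u_iu_{i+1}$ by the detour $u_ixu_{i+1}$ does the job. The number of indices $i$ with $u_i\in N(x)$ is at least $\deg(x)-(n-1-\ell)\ge \ell-\tfrac{n-4}{2}$, and if no two such indices were consecutive they would form an independent set in the path on $\ell+1$ vertices, forcing
\[
\ell-\tfrac{n-4}{2}\le \Bigl\lceil\tfrac{\ell+1}{2}\Bigr\rceil.
\]
This inequality fails once $\ell$ is reasonably close to $n-1$, handling the bulk of intermediate lengths by direct insertion.

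The main obstacle is the small-$\ell$ regime (roughly $\ell\le n/2$), where the neighbours of $x$ on $P$ can perfectly alternate with non-neighbours and block immediate insertion. Here I would deploy a P\'osa-style rotation: given any $x$-neighbour $u_i$ on $P$, the chord-rich neighbourhood of $u_{i+1}$ (of size at least $(n+2)/2$, with at most a few off-path candidates) produces a chord $u_ju_{i+1}$ that lets us re-route $P$ into a new $u$-$v$ path of the same length $\ell$ whose ``consecutive neighbour pattern'' around $x$ has changed. Iterating rotations while accounting for the $+2$ slack in $\delta\ge(n+2)/2$ eventually produces a configuration admitting a direct insertion of $x$, closing the induction. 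The delicate part is tracking these rotations uniformly in $\ell$, and is the precise reason why $\delta\ge(n+2)/2$ rather than the Dirac bound $\delta\ge n/2$ is needed.
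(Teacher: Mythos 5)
This statement is quoted by the paper from Williamson's article [Wi] and no proof is given in the paper itself, so your attempt has to stand on its own; as written it has a genuine gap at its core. Your counting for the direct-insertion step does not do what you claim. A vertex $x$ off a $u$--$v$ path $P$ of length $\ell$ has at least $\delta(G)-(n-\ell-2)\ge \ell+3-n/2$ neighbours on $P$, and to force two of them to be consecutive you need this to exceed the independence number $\lceil(\ell+1)/2\rceil$ of the path on $\ell+1$ vertices; solving $\ell+3-n/2>(\ell+2)/2$ gives $\ell>n-4$. So direct insertion is guaranteed only for $\ell\in\{n-3,n-2\}$ (and your slightly weaker bound $\ell-\tfrac{n-4}{2}$ does even less), not for ``the bulk of intermediate lengths.'' Essentially the entire range $2<\ell\le n-4$ is pushed onto the rotation argument, which is only a sketch. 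That sketch has its own difficulty: P\'osa rotation keeps one endpoint fixed and moves the other, whereas here both $u$ and $v$ must remain the endpoints and the length $\ell$ must be preserved. A single chord $u_ju_{i+1}$ together with the deletion of one path edge does not in general produce another $u$--$v$ path of length $\ell$; you would need a pair of compatible chords or a different re-routing device, and you give no potential function or termination argument showing the iteration ever reaches an insertable configuration. Until that is supplied, the theorem is unproved for almost all lengths.

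Two smaller points. First, your $\ell=2$ computation over $V(G)\setminus\{u,v\}$ should use $|N(u)\setminus\{v\}|,|N(v)\setminus\{u\}|\ge n/2$, giving $|N(u)\cap N(v)\setminus\{u,v\}|\ge 2$ rather than $4$; the conclusion (a common neighbour exists) survives. Second, the extreme case $\ell=n-1$ via Theorem~\ref{thm4} is fine. If you want a complete argument, the standard route (essentially Williamson's) is an induction on $n$ with a careful case analysis for short target lengths, rather than vertex insertion into an existing path; I would recommend either reproducing that or simply citing [Wi] as the paper does.
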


Also we use the following easy fact as a matter of course.

\noindent\textbf{Fact 1}. Every 2-connected graph $G$ is either Hamiltonian, or contains a cycle $C$ with at least $2\delta(G)$ vertices.\\

Let $B\subseteq E$ be the set of cut-edges of a graph $G$. Let $\mathcal{C}$ denote the
set of connected components of $G'= (V;E\setminus B)$. There are two types of elements in $\mathcal{C}$,
singletons and connected bridgeless subgraphs of $G$. Contracting each element of $\mathcal{C}$ to a vertex, we obtain a new graph $G^*$. It is easy to see that $G^*$ is the well-known \emph{bridge-block tree} of $G$, and the edge set of $G^*$ is $B$. An element of $\mathcal{C}$ which corresponds to a leaf in $G^*$ is called an \emph{end-block} of $G$.

Our first main result proves Theorem~\ref{th1} when $n \geq 9$.

\begin{thm}\label{thm6}
Let $G$ be a connected noncomplete graph of order $n\geq 9$. If $\delta(G)\geq n/4$, then $pc(G)=2$.
\end{thm}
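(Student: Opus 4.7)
Since $G$ is noncomplete we have $pc(G)\ge 2$, so the goal is to exhibit a 2-edge-coloring in which every pair of vertices is joined by a proper path. My plan is a case analysis driven by the bridge-block tree $G^*$ of $G$, using the minimum degree condition to restrict how $G^*$ can look.

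The starting observation is that every end-block $B$ of $G$ contains all neighbors of each of its non-cut vertices, so $|B|\ge \delta(G)+1\ge n/4+1$. With $n\ge 9$, this forces $G^*$ to have at most three leaves, and in the three-leaf case the end-blocks either share a common cut-vertex or together exhaust almost all of $V(G)$. Moreover, for each end-block $B$, either $|B|\ge n/2$ and Theorem~\ref{thm4} makes $G[B]$ Hamilton-connected, or $|B|<n/2$, in which case the non-cut vertices of $B$ keep degree $\ge n/4 \ge |B|/2$ in $G[B]$, so Theorem~\ref{thm4} still yields a Hamilton cycle in $G[B]$.

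With this in hand, the proof splits into three regimes. If $G$ is $3$-connected, Corollary~\ref{cor1} gives $pc(G)=2$ immediately. If $G$ is $2$-connected but not $3$-connected, pick a $2$-vertex-cut $S=\{x,y\}$; the minimum-degree hypothesis bounds the number of components of $G-S$ and forces both sides to be large and well-connected, so I apply Lemma~\ref{lem4} or Lemma~\ref{lem5} to a suitable bridgeless spanning subgraph of each side and use the strong property to merge the colorings across $S$. If $G$ has a cut-vertex or a bridge, I apply Lemma~\ref{lem9} along the bridge-block tree to reduce to smaller contracted graphs, coloring each end-block along a Hamilton path ending at its unique cut-vertex (this is the workhorse use of Theorem~\ref{thm4}), and glueing the pieces together using Lemmas~\ref{lem3} and~\ref{lem6}.

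The main obstacle, and precisely the gap identified in \cite{BFGMMMT}, lies in justifying two innocuous-sounding claims: that a given end-block $B$ really admits a Hamilton path whose endpoint is the prescribed cut-vertex, and that removing a $2$-vertex-cut from $G$ produces the expected number of components. To close the first I work with the full induced subgraph $G[B]$ rather than $B$ with its cut-vertex deleted, retaining the degree bound $n/4$; to close the second I make a direct degree count on $G-S$. The hypothesis $n\ge 9$ together with the exclusion of the sporadic graphs $G_1,G_2$ is what handles the residual small cases where the Hamilton-path-with-endpoint conclusion could otherwise fail.
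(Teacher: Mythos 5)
There is a genuine gap, and it is the central one: your plan never produces a \emph{2}-coloring in the case where $G$ is $2$-connected but not $3$-connected and not bipartite. Lemma~\ref{lem5} gives a $3$-edge-coloring with the strong property, and Lemma~\ref{lem4} requires the piece to be bipartite; neither applies to a general non-bipartite $2$-connected side of $G-S$, and there is no ``suitable bridgeless spanning subgraph'' to which a two-color lemma applies unless that subgraph is bipartite. The paper's entire proof is organized around exactly this point: it first passes to a bipartite spanning subgraph $H$ of $G$ with the maximum number of edges, so that Theorem~\ref{thm3} gives $\delta(H)\ge n/8$ and Lemma~\ref{lem8} (bipartite $+$ bridgeless $\Rightarrow$ $pc=2$ with the strong property) becomes available; the maximality of $H$ is then exploited combinatorially (Claims 1--3) to show that if $H$ is not bridgeless, its bridge-block tree is a $K_{1,3}$ with three large, pairwise non-adjacent bridgeless components around one small central block, after which explicit spanning substructures (a cycle plus two Hamilton paths through a common vertex) are exhibited. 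Without the bipartite reduction, your case analysis on the connectivity of $G$ has no mechanism to reach two colors.

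The second problem is that your fix for the Hamilton-path-with-prescribed-endpoint issue does not work. You claim that for an end-block $B$ with $|B|<n/2$ the vertices of $G[B]$ have degree $\ge n/4\ge |B|/2$, but this holds only for the non-cut vertices of $B$; the cut-vertex may have as few as two neighbors inside $B$, so $\delta(G[B])\ge |B|/2$ fails and Theorem~\ref{thm4} does not apply to $G[B]$. This is precisely the gap the paper identifies in \cite{BFGMMMT}, and working with $G[B]$ instead of $B$ minus its cut-vertex does not close it. (Your other clause is also reversed: $|B|\ge n/2$ makes the relative degree condition \emph{weaker}, so it cannot yield Hamilton-connectedness of $G[B]$ from $\delta(G)\ge n/4$.) The paper avoids this by first pinning down the sizes exactly ($|D_i|\ge n/4+1$, $|B_0|$ a singleton or very small) and then applying Theorems~\ref{thm4} and~\ref{thm5} to the components $D_i$ themselves, where every vertex genuinely satisfies the required degree bound, before attaching the central vertex by counting the (at least two) edges into some $D_i$. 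A minor further point: Lemma~\ref{lem9} splits along cut-\emph{edges}, not cut-vertices, so your reduction ``along the bridge-block tree'' does not handle cut-vertices that lie on no bridge; and the exclusion of $G_1,G_2$ is irrelevant here since those graphs have $7$ and $8$ vertices while this theorem assumes $n\ge 9$.
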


\begin{proof}
If $G$ contains a bridgeless bipartite spanning subgraph, then $pc(G)=2$ by Lemmas \ref{lem1} and \ref{lem8}. Next, we assume that every bipartite spanning subgraph of $G$ has a cut-edge. Let $H$ be a bipartite spanning subgraph of $G$ with the greatest possible number of edges. From Theorem \ref{thm3}, we know that $\delta_H(v)\geq \frac{1}{2}\delta_G(v)\geq n/8$, which implies that $\delta(H)\geq 2$. Hence, we have that each end-block of $H$ is a maximal connected bridgeless bipartite subgraph, and so it contains at least $4$ vertices. In order to guarantee the minimum degree of $H$, we know that each end-block of $H$ contains at least $n/4$ vertices. Hence, $|V(L)|\geq \max\{4, n/4\}$ for each end-block $L$ of $H$. Let $e=x_1x_2$ be an arbitrary cut-edge of $H$, it follows that $H-e$ has two components, denoted by $I_1$ and $I_2$. We present a claim here.\\

{\it Claim 1.}  $|E_G(I_1, I_2)|\le 2.$

Note that $I_1$ and $I_2$ are bipartite, each with at least 4 vertices. Let $(U_i, V_i)$ be the bipartition of $I_i$ such that $x_1\in U_1$ and $x_2\in U_2$. If there is an edge $e_1\in (E_G(U_1,U_2)\cup E_G(V_1, V_2))\setminus \{e\}$, or if there are two edges $e_2,e_3\in E_G(U_1,V_2)\cup E_G(V_1, U_2)$, we can find a bipartite subgraph $H'=H+e_1$ or $H''=H-e+e_2+e_3$ of $G$ which has larger size than $H$, a contradiction. Hence, we know that $E_G(U_1,U_2)\cup E_G(V_1, V_2)=\{e\}$ and $|E_G(U_1,V_2)\cup E_G(V_1, U_2)|\leq 1$, or in other words, $|E_G(I_1, I_2)|\leq 2$, and so Claim~1 holds.\\

We choose a bipartite spanning subgraph, still denoted by $H$, of $G$ satisfying the following two conditions:

(1) $H$ has the maximum number of edges, and subject to condition (1),

(2) $\Delta(H^*)$ is as small as possible, where  $H^*$ is the bridge-block tree of $H$.

From Lemmas \ref{lem6}, \ref{lem8} and \ref{lem9}, we see that the result holds if $\Delta(H^*)\leq 2$. Hence, we only need to consider $\Delta(H^*)\geq 3$ in the following. On the other hand, we can also get that $H$ has at most three end-blocks since each end-block of $H$  contains at least $n/4$ vertices. This implies that $\Delta(H^*)\leq 3$. Hence, we have $\Delta(H^*)= 3$, and there is only one vertex attaining the maximum degree of $H^*$. Let $b_0\in V(H^*)$ be the vertex with degree $3$ and $B_0$ be the subgraph of $H$ corresponding to $b_{0}$. Let $D_1, D_2, D_3$ be the connected components of $H-V(B_0)$ and $L_i$ be the end-block included in $D_i$ for $i=1,2,3$. Since $|L_i|\ge 4$ for $i=1,2,3$, it follows that $n\ge |B_0|+|L_1|+|L_2|+|L_3|\ge 1+3\times 4=13$. So, in the following, we always assume $n\ge 13$. From Claim 1, we know that $|E_G(B_0, D_i)|\leq 2$ for $1\leq i\leq 3$. Hence, $d(b_0)\leq 6$. To complete our proof, we need the following claims. \\

{\it Claim 2.} $H^*\cong K_{1,3}$, and $|L_i|=|D_i|\ge n/4+1$ for $i=1,2,3$.

From Claim 1, we have that $d(L_i)\le 2$. In order to guarantee the minimum degree condition $\delta(G)\ge n/4$, we can get that $|L_i|\ge n/4+1$ for $i=1,2,3$. If $13\leq n\leq 16$, we know $|L_i|\ge 5$ for $i=1,2,3$. It follows that $|L_i|=|D_i|=5 \ (i=1,2,3)$ and $|B_0|=1$. Thus, Claim~2 holds. We may assume $n\ge 17$. Then, we know that $\delta(H)\geq 3$. Suppose $H^*\not\cong K_{1,3}$. Take a vertex $v$ with degree $2$ in $H^*$. Denote by $B(v)$ the block of $H$ corresponding to $v$.  Therefore, each vertex of $H^*$ other than $b_0$ corresponds to a maximal connected bridgeless subgraph of $H$ since $\delta(H)\geq 3$. By employing the minimum degree condition of $H$, we have that $B(v)$ contains at least $n/4$ vertices. Hence, $|V(H)|\geq |B_0|+|L_1|+|L_2|+|L_3|+|B(v)|\geq n+1$, a contradiction. Thus, we can also get that $H^*\cong K_{1,3}$. \\

{\it Claim 3.} $E_G(L_i, L_j)=\emptyset$ for $1\leq i\neq j\leq 3$.

Suppose, to the contrary of the claim, we assume without loss of generality that $E_G(L_1, L_2)\neq \emptyset$. Let $e_1\in E_G(L_1, L_2)$, and let $e_2$ be the cut-edge incident with $L_2$ in $H$. Let $H_1=H-e_2+e_1$. Note that $H_1$ is also a maximum bipartite spanning subgraph. Since $D_2$ is bridgeless from Claim 2, it follows that $\Delta(H_1^*)=\Delta(H^*)-1$, which contradicts the choice of $H$.\\

To prove our result, we distinguish the following three cases based on the value of $n$. Recall that we may already assume $n \geq 13$.

\textbf{Case 1.} $n\geq 25.$

In this case, we know that $\delta(G)\geq 7$. From Claim~2, we know that $D_i \ (1\leq i\leq 3)$ is bridgeless, and $|D_i|\geq n/4+1$ for $i=1,2,3$. So, we have $|B_0|\leq n/4-3$. Since $d(b_0)\leq 6$, the minimum degree condition $\delta(G)\geq 7$ cannot be satisfied when $B_0$ is a singleton. If $B_0$ is not a singleton, then there exists a vertex $v$ of $B_0$ such that $|E_G(\{v\}, V(G)\setminus B_0)| \leq 3$. Since $|B_0|\leq n/4-3$, it follows that $d_G(v) \leq n/4-4+3< n/4$, contradicting the assumption that $\delta(G) \geq n/4$. This implies that this case cannot occur since $\Delta(H^*)=3$.

\textbf{Case 2:} $17\leq n\leq 24.$

By Claims~2 and~3, we again get that $D_i \ (1\leq i\leq 3)$ is bridgeless, and $E_G(D_i, D_j)=\emptyset$ for $1\leq i\neq j \leq 3$. Moreover, we have that $|D_i|\geq n/4+1$ for $i=1,2,3$, and $|B_0|\leq n/4-3$.
Since $17\leq n\leq 24$, we have that $|D_i|\geq 6$ for $i=1,2,3$, and $|B_0|\leq 3$.  It follows that $B_0$ is a singleton, and $n\geq 3\times 6+1=19$. Let $B_0=\{v\}$.

\textbf{Subcase 2.1:} $19\leq n\leq 20.$

 In this case, we know that $\delta(G)\geq 5$, and there are only two possibilities: (1) $|D_1|=|D_2|=|D_3|=6$; and (2) $|D_1|=|D_2|=6$ and $|D_3|=7$.
 Utilizing Theorems \ref{thm4} and \ref{thm5} and the minimum degree condition $\delta(G)\geq 5$, it is easy to check that $D_1$ and $D_2$ are panconnected, and $D_3$ is Hamilton-connected. Since $d_G(v) \geq 5$ and $|E_G(B_0, D_i)| \leq 2$, we have that at least one of $D_1$ and $D_2$ satisfies that $|E_G(B_0,D_i)|=2$, say $D_1$. Then, we can find a cycle $C$ of length 6 containing $v$ in $G[\{v\}\cup D_1]$. We see that $G[\{v\}\cup D_i]$ contains a Hamilton path $P_i$ such that $v \in endpoints(P_i)$ for $i=2,3$. Let $S=C \cup P_2 \cup P_3$. It is easy to check that $pc(S)=2$. Note that $G$ contains $S$ as a subgraph and $V(G)\setminus V(S)$ has only one vertex with degree at least 5 in $G$. Then, $pc(G)=2$ by Lemma \ref{lem3}.

\textbf{Subcase 2.2:} $21\leq n\leq 24.$

In this case, we have that $\delta(G)\geq 6$, so $|D_i|\geq 7$. Hence, we know that $n\geq 3\times 7+1=22$.  There are four cases: (1) $|D_1|=|D_2|=|D_3|=7$; (2) $|D_1|=|D_2|=7$ and $|D_3|=8$; (3) $|D_1|=7$ and $|D_2|=|D_3|=8$; (4) $|D_1|=|D_2|=7$ and $|D_3|=9$.
Utilizing Theorems~\ref{thm4} and~\ref{thm5} and the minimum degree condition $\delta(G)\geq 5$, it is easy to check that in the first three cases, $D_i$ is panconnected for $1\leq i\leq 3$; in the forth case, $D_1$ and $D_2$ are panconnected, and $D_3$ is Hamilton-connected. The conclusion $pc(G)=2$ can be easily checked by a similar substructure argument as in Subcase 2.1.

\textbf{Case 3:} $13\leq n\leq 16.$

From Claim 1, we know that  $|L_i|=|D_i|=5 \ (i=1,2,3)$  and $|B_0|=1$. Let $B_0=\{v\}$. Utilizing Theorem~\ref{thm4} and the minimum degree condition $\delta(G)\geq 4$, it is easy to check that $D_i$ is Hamilton-connected for $1\leq i\leq 3$.  Since $d_G(v) \geq 4$ and $|E_G(B_0, D_i)| \leq 2$, we have that at least one of $D_i \ (i=1,2,3)$  satisfies that $|E_G(B_0,D_i)|=2$, say $D_1$. Then, we can find a Hamilton cycle $C$ in $G[\{v\}\cup D_1]$. We see that $G[\{v\}\cup D_i]$ contains a Hamilton path $P_i$ such that $v \in endpoints(P_i)$ for $i=2,3$. Let $S=C \cup P_2 \cup P_3$. It is easy to check that $pc(S)=2$. Then, we can get that $pc(G)\le pc(S) =2$ by Lemma \ref{lem1}.

The proof of Theorem~\ref{thm6} is now complete.
\end{proof}

Finally, we consider the case when $n$ is small.

\begin{thm}\label{thm7}
Let $G$ be a connected noncomplete graph with $5 \leq n \leq 8$ vertices. If $G \notin \{G_1, G_2\}$ and $\delta(G) \geq 2$, then $pc(G)=2$.
\end{thm}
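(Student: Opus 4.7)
The plan is to follow the template of Theorem~\ref{thm6}: take a maximum bipartite spanning subgraph $H$ of $G$, reduce to the case where $H$ has a cut-edge, analyse the bridge-block tree $H^*$, and do a finite case analysis indexed by $n \in \{5,6,7,8\}$ and the shape of $H^*$.

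First, by Theorem~\ref{thm3} we have $\delta_H(v) \geq \frac{1}{2}\delta_G(v) \geq 1$, so $H$ has no isolated vertex. If $H$ is bridgeless, then Lemma~\ref{lem8} yields $pc(H) = 2$, and hence $pc(G)=2$ by Lemma~\ref{lem1}. Thus I may assume $H$ has a cut-edge, and the same argument as Claim~1 of Theorem~\ref{thm6} forces $|E_G(I_1,I_2)| \le 2$ across every cut-edge $e$ of $H$, where $I_1,I_2$ are the components of $H-e$. I again choose $H$ so that $\Delta(H^*)$ is as small as possible subject to maximality of the size; Lemmas~\ref{lem6}, \ref{lem8}, \ref{lem9} dispose of the case $\Delta(H^*)\leq 2$, so I may assume $\Delta(H^*)\geq 3$.

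Since $n\leq 8$ and every nontrivial end-block of $H$ is a bipartite bridgeless graph of order $\geq 4$, the possible bridge-block trees of $H$ are very restricted. For each remaining structural type, I attempt in order:
\emph{(i)} an edge-swap inside $G$ that yields a bipartite spanning subgraph of larger size, or of the same size but with smaller $\Delta(H^*)$, contradicting the choice of $H$;
\emph{(ii)} a cut-edge decomposition via Lemma~\ref{lem9}, combined with Corollary~\ref{cor1}, Lemma~\ref{lem4}, or Lemmas~\ref{lem7}--\ref{lem8} on the resulting pieces; or
\emph{(iii)} an explicit $2$-coloured substructure $S\subseteq G$ with $pc(S)=2$, extended to $G$ via Lemma~\ref{lem3}. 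The relevant blocks all have order at most $5$, so Theorem~\ref{thm4}, together with the easy fact that a $2$-connected graph on $\leq 5$ vertices with $\delta\geq 2$ is Hamiltonian, supplies all the Hamilton paths and cycles needed.

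The configurations for which none of \emph{(i)}--\emph{(iii)} is available are exactly the two exceptions $G_1$ and $G_2$ in Figure~1. Verifying this is the main obstacle: the bookkeeping for $n=7$ and $n=8$ is delicate, because I must check that every non-exceptional way of attaching the at most two cross-edges allowed by Claim~1 admits one of the three moves, and that the excluded attachments really are isomorphic to $G_1$ or $G_2$ rather than merely resembling them. For $n=5,6$ the same analysis produces no exceptions, confirming the sharpness claim of the introduction that $G_1$ and $G_2$ are the only counterexamples.
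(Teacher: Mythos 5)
The proposal has a genuine gap. The structural backbone you import from Theorem~\ref{thm6} relies on $\delta(H)\geq 2$: that is what forces every end-block of $H$ to be a nontrivial bridgeless bipartite subgraph on at least $4$ vertices, which in turn bounds the number of end-blocks and pins down $H^*$. Here $\delta(G)\geq 2$ only yields $\delta_H(v)\geq 1$, so $H$ may have pendant vertices; its end-blocks can be singletons and $H$ can even be a spanning tree (this happens for $G_1$ itself, where a maximum bipartite spanning subgraph takes two edges from each triangle). Consequently the assertion that ``the possible bridge-block trees of $H$ are very restricted'' is not justified, $\Delta(H^*)$ need not be at most $3$, and there is no analogue of Claim~2 or of the end-block size counts. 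Your hedge about ``nontrivial'' end-blocks does not rescue this, because nothing prevents all end-blocks of $H$ from being trivial.

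Moreover, even granting the setup, the proposal defers exactly the content of the theorem: the claim that the configurations surviving your moves \emph{(i)}--\emph{(iii)} are precisely $G_1$ and $G_2$ is asserted, acknowledged as ``the main obstacle,'' and not carried out. The paper avoids the bipartite-subgraph machinery here entirely and argues by connectivity: $\kappa(G)\geq 3$ is dispatched by Corollary~\ref{cor1}; for $\kappa(G)=1$ it analyzes the components of $G\setminus v$ at a cut-vertex and either finds a Hamilton path or a short proper-connected path extended via Lemma~\ref{lem3}; for $\kappa(G)=2$ it uses a longest cycle together with ears to exhibit $K_{2,3}$, $K_{2,4}$, a long path, or the auxiliary subgraphs $G_3,G_4$, finishing with Lemmas~\ref{lem4}, \ref{lem6} and \ref{lem3}. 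If you wish to complete your route you would have to enumerate all maximum bipartite spanning subgraphs with pendant structure for $n\in\{7,8\}$, which is a larger case analysis than the one the paper actually performs.
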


\begin{proof}
It is easy to check that $pc(G)=2$ for $5 \leq n \leq 6$. Now let $n \geq 7$. Denote by $\kappa(G)$ the connectivity of $G$ and $C(G)$ a longest cycle of $G$. If $\kappa(G) \geq 3$, then by Corollary \ref{cor1}, we have $pc(G)=2$. So, we may assume that $\kappa(G)=1$ or $2$.

For $n=7$, we divide the proof into two cases according to the value of $\kappa(G)$.

\textbf{Case 1:} $\kappa(G)=1$.

Let $v$ be a cut-vertex of $G$ and let $C_1, \ldots, C_{\ell}$ be the components of $G \backslash v$ such that $|C_1| \leq \ldots \leq |C_{\ell}|$. By the minimum degree condition, we see that $|C_1| \geq 2$, and so $\ell =2$ or $3$. If $\ell =3$, then $G =G_1$, which contradicts the assumption. Thus, $\ell =2$. Since $n=7$, we further divide the proof into two subcases:

\textbf{Subcase 1.1:} $|C_1|=2$, $|C_2|=4$.

Let $V(C_1)=\{u_1, u_2\}$, $V(C_2)=\{w_1, w_2, w_3, w_4\}$. Since $\delta(G)\geq 2$, we have that $G[\{v\} \cup C_1]$ is a $K_3$. Since $G$ is connected, there is at least one edge from $v$ to $C_2$. Without loss of generality, we assume $vw_1 \in E(G)$. Since $d_G(w_1)\geq 2$, we know that $w_1$ have at least one neighbor in $\{w_2,w_3, w_4\}$. Assume that $w_1w_2\in E(G)$. Then there is a path $P=u_2u_1vw_1w_2$. We have $pc(P)=2$. If $|E(w_i, P)|\geq 2$ for $i=3,4$, then by Lemma \ref{lem3}, $pc(G)=2$. Thus, we may assume that $|E(w_3, P)| \leq 1$. By the minimum degree condition, $w_3w_4 \in E(G)$. If $w_iw_2 \in E(G)$ for $i=3$ or $4$, then there is a Hamilton path $u_2u_1vw_1w_2w_3w_4$ or $u_2u_1vw_1w_2w_4w_3$, and thus $pc(G)=2$. Hence, we assume that $w_iw_2 \notin E(G)$ for $i=3,4$. Therefore, $w_iw_1 \in E(G)$ for $i=3$ or $4$ since $C_2$ is connected. Since $d_G(w_2) \geq 2$, we have $w_2v \in E(G)$. We also can find a Hamilton path $u_2u_1vw_2w_1w_3w_4$ or $u_2u_1vw_2w_1w_4w_3$, and thus $pc(G)=2$.

\textbf{Subcase 1.2:} $|C_1|=|C_2|=3$.

Let $V(C_1)=\{u_1, u_2, u_3\}$, $V(C_2)=\{w_1, w_2, w_3\}$. Since $C_1, C_2$ are connected, we may assume that $C_1$ contains a path $P_1=u_1u_2u_3$ and $C_2$ contains a path $P_2=w_1w_2w_3$. If $vu_1 \in E(G)$ or $vu_3 \in E(G)$, then we can find a Hamilton path $P_1'$ in $G[\{v\}\cup C_1]$ such that $v \in endpoints(P_1')$. Otherwise, we have $vu_2 \in E(G)$. Since $d_G(u_1)\geq 2$ and $d_G(u_3) \geq 2$, we have $u_1u_3 \in E(G)$. We also can find a Hamilton path $P_1'$ in $G[\{v\}\cup C_1]$ such that $v \in endpoints(P_1')$. Similarly, we can find a Hamilton path $P_2'$ in $G[\{v \}\cup C_2]$ such that $v \in endpoints(P_2')$. Then $P_1' \cup P_2'$ is a Hamilton path of $G$, and hence, $pc(G)=2$.

\textbf{Case 2:} $\kappa(G)=2$.

By Fact 1, we have $|C(G)|\geq 4$.

If $|C(G)|=4$, then $C(G)$ has an ear since $G$ is 2-connected. Let $P_1$ be a longest ear of $C(G)$. It is easy to check that $P_1$ has length 2 and the end-vertices of $P_1$ are the antipodal vertices of $C(G)$. Then,  $G$ contains a 2-connected bipartite subgraph $P_1\cup C(G) \cong K_{2,3}$. Noticing that there are two vertices which are not isolated outside of $P_1\cup C(G)$, it follows that $pc(G)=2$ from Lemmas \ref{lem4} and \ref{lem6}.

If $|C(G)|=5$, then there exists a Hamilton path or a path of length 6. Since $\delta(G) \geq 2$, it follows that $pc(G)=2$ from Lemma \ref{lem3}.

If $|C(G)|\geq 6$, then by the minimum degree condition, we can find a Hamilton path in $G$. Hence, $pc(G)=2$.

For $n=8$, similarly to the case $n=7$, we also divide the proof into two cases according to the value of $\kappa(G)$.

\begin{figure}
  \centering
 \scalebox{1}{\includegraphics{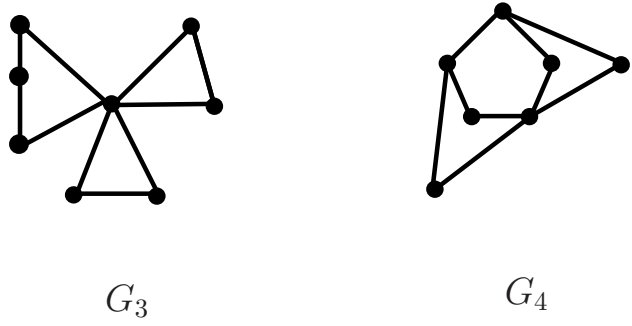}}\\
  \caption{Two subgraphs of $G$ for the case $n=8$.}
\end{figure}

\textbf{Case 3:} $\kappa(G)=1$.

Let $v$ be a cut-vertex of $G$ and let $C_1, \ldots, C_{\ell}$ be the components of $G \backslash v$ such that $|C_1| \leq \ldots \leq |C_{\ell}|$. By the minimum degree condition, we see that $|C_1| \geq 2$, and so $\ell =2$ or $3$. If $\ell =3$, then $G$ contains $G_3$ (see Fig. 2) as a spanning subgraph. It is easy to check that $pc(G_3)=2$. From Lemma \ref{lem1}, we have $pc(G)=2$. Now we let $\ell =2$. Since $n=8$, we further divide the proof into two subcases:

\textbf{Subcase 3.1:} $|C_1|=2$, $|C_2|=5$.

Let $V(C_1)=\{u_1, u_2\}$, $V(C_2)=\{w_1, w_2, w_3, w_4, w_5\}$. Since $\delta(G)\geq 2$, we have that $G[\{v\} \cup C_1]$ is a $K_3$. Since $G$ is connected, there is at least one edge from $v$ to $C_2$. Without loss of generality, we assume $vw_1 \in E(G)$. Since $d_G(w_1)\geq 2$, we know that $w_1$ have at least one neighbor in $\{w_2,w_3, w_4, w_5\}$. Assume $w_1w_2\in E(G)$. Then, there is a path $P=u_2u_1vw_1w_2$. We have $pc(P)=2$. If $|E(w_i, P)|\geq 2$ for $i=3,4,5$, then by Lemma \ref{lem3}, $pc(G)=2$. If there exists one vertex of $\{w_3,w_4,w_5\}$ such that $|E(w_i, P)|=0$, without loss of generality, let $|E(w_3, P)|=0$. Then, $w_3w_4, w_3w_5\in E(G)$ since $d_G(w_3)\ge 2$. If $w_2w_4\in E(G)$ or $w_2w_5\in E(G)$, then there is a Hamilton path $u_2u_1vw_1w_2w_4w_3w_5$ or $u_2u_1vw_1w_2w_5w_3w_4$. Then, $pc(G)=2$. Hence, we assume that $w_2w_i\notin E(G)$  for $i=3,4,5$. Since $d_G(w_2)\ge 2$, we have $w_2v\in E(G)$. Since $C_2$ is connected, we have that there is at least one edge $w_1w_4$ or $w_1w_5 \in E(G)$. We also can find a Hamilton path $u_2u_1vw_2w_1w_4w_3w_5$ or  $u_2u_1vw_2w_1w_5w_3w_4$. Thus, $pc(G)=2$.

Now we may assume that $|E(w_3, P)|=1$. Since $d_G(w_3)\ge 2$, we assume $w_3w_4\in E(G)$. If $w_2w_3\in E(G)$ or $w_2w_4\in E(G)$, then we have have $pc(G)=2$ by Lemma \ref{lem3} since $d_G(w_5)\ge 2$. Hence, $w_2w_3, w_2w_4\notin E(G)$. Then, we have $w_2v\in E(G)$ or $w_2w_5\in E(G)$ since $d_G(w_2)\ge 2$. If $w_2v\in E(G)$, then there is another path $P'=u_2u_1vw_2w_1$ such that $pc(P')=2$. If $w_1w_3\in E(G)$ or $w_1w_4\in E(G)$, then we have $pc(G)=2$ by Lemma \ref{lem3} since $d_G(w_5)\ge 2$. Thus, we have $w_3v,w_4v\in E(G)$ since $E(w_i,P)\ge 1$ for $i=3,4,5$. It is easy to check that $G$ contains either a Hamilton path or $G_3$ as a subgraph since $d_G(w_5)\ge 2$. Thus, $pc(G)=2$. Now we assume that $w_2v\notin E(G),w_2w_5\in E(G)$. Then, there is a path $P''=u_2u_1vw_1w_2w_5$ such that $pc(P'')=2$. If $w_5w_3\in E(G)$ or $w_5w_4\in E(G)$, then we can find a Hamilton path $u_2u_1vw_1w_2w_5w_3w_4$ or $u_2u_1vw_2w_1w_5w_4w_3$. Thus, $pc(G)=2$. Hence, $w_1w_5\in E(G)$ or $w_5v\in E(G)$ since $d_G(w_5)\ge 2$. If $w_5v\in E(G)$, then $G$ contains a Hamilton path since $C_2$ is connected. Thus, $pc(G)=2$. Now we assume that $w_5v\notin E(G)$ and $w_1w_5\in E(G)$. Since $G\neq G_2$, we have $G$ contains a Hamilton path. Thus, $pc(G)=2$.

\textbf{Subcase 3.2:} $|C_1|=3$, $|C_2|=4$.

With similar argument as in the Subcase 1.2 for $n=7$, we can get $pc(G)=2$.

\textbf{Case 4:} $\kappa(G)=2$.

By Fact 1, we have $|C(G)|\geq 4$.

If $|C(G)|=4$, then $C(G)$ has an ear since $G$ is 2-connected. Let $P_1$ be a longest ear of $C(G)$. It is easy to check that $P_1$ has length 2 and the end-vertices of $P_1$ are the antipodal vertices of $C(G)$.  Let $P_2$ be a longest ear of $C(G)\cup P_1$. It is easy to see that $P_2$ also has length 2 and the end-vertices of $P_2$ are the end-vertices of $P_1$. Then, $G$ contains a 2-connected bipartite subgraph $P_1\cup P_2\cup C(G) \cong K_{2,4}$. Noticing that there are two vertices which are not isolated outside of $P_1\cup P_2\cup C(G)$, it follows that $pc(G)=2$ from Lemmas \ref{lem4} and \ref{lem6}.

If $|C(G)|=5$, then it is easy to check that $G$ contains $G_4$ (see Fig. 2) as a subgraph. Thus, $pc(G)=2$ from Lemma \ref{lem3}.

If $|C(G)|\geq 6$, then there exists a Hamilton path or a path of length 7. Since $\delta(G) \geq 2$, it follows that $pc(G)=2$ from Lemma \ref{lem3}.

The proof of Theorem \ref{thm7} is thus complete.
\end{proof}

Combining Theorem~\ref{thm6} and Theorem~\ref{thm7}, we obtain our main result Theorem~\ref{th1}.

\section{A byproduct result}

From the proof of Theorem \ref{thm6}, it seems that we one can get that if $G$ is a bipartite
graph with minimum degree $\delta(G)\geq n/8$ then $pc(G)=2$. But, it is not so. A counterexample of this is
the following constructed graph: Let $G_i$ be a complete bipartite graph
such that each part has $n/8$ vertices for $i = 1, 2, 3, 4$, and take a vertex $v_i \in G_i$ for each $1 \le i \le 4$. Let $G$ be a graph obtained from $G_1, G_2, G_3, G_4$
by joining $v_1$ and $v_j$ with an edge for each $2 \le j \le 4$. Then the resulting graph $G$ is connected and has $\delta(G) = n/8$, but $pc(G)=3$. However, if we increase the minimum degree of $G$ a little bit larger than $n/8$, then we can get $pc(G)=2$.

\begin{thm}\label{thm8}
Let $G$ be a connected bipartite graph of order $n\geq 4$. If $\delta(G)\geq \frac{n+6}{8}$, then $pc(G)=2$.
\end{thm}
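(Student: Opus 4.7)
The proof of Theorem~\ref{thm8} mirrors that of Theorem~\ref{thm6}, but plays out directly on $G$ (no maximum bipartite spanning subgraph is needed, since $G$ is already bipartite). If $G$ is bridgeless, Lemma~\ref{lem8} immediately yields $pc(G)=2$, so assume $G$ has at least one bridge and let $G^{*}$ denote its bridge-block tree. The plan has two parts: show $\Delta(G^{*}) \leq 2$, and then combine Lemmas~\ref{lem8},~\ref{lem6}, and~\ref{lem9} along the path $G^{*}$ to conclude $pc(G)=2$.

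The sizing input comes from bipartiteness. If $L$ is an end-block of $G$ with cut-vertex $x$, every $v \in L \setminus \{x\}$ has all its $G$-neighbors in $L$, so in the bipartition $(X,Y)$ of $L$ both parts have size at least $\delta(G)$ by the usual bipartite min-degree argument, giving $|L| \geq 2\delta(G) \geq (n+6)/4$. Now suppose some $b_{0} \in V(G^{*})$ has degree $d \geq 3$ with corresponding block $B_{0}$. If $B_{0}=\{v\}$ is a singleton, then $d_{G}(v)=d$, forcing $\delta \leq d$; together with $n \geq 1 + 2d\delta$ and $\delta \geq (n+6)/8$, this yields $(4-d)n \geq 4+6d$, which is impossible for $d \geq 3$. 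If $B_{0}$ is non-singleton (bipartite bridgeless, hence $|B_{0}| \geq 4$), then $n \geq |B_{0}| + 6\delta \geq 4 + 6\delta$ forces $n \geq 34$. In that regime I apply a degree-sum argument inside $B_{0}$: $2|E(B_{0})| + d = \sum_{v \in B_{0}} d_{G}(v) \geq |B_{0}|\delta$ together with $|E(B_{0})| \leq |B_{0}|^{2}/4$ gives $d \geq |B_{0}|(2\delta - |B_{0}|)/2$ in the range $|B_{0}| < 2\delta$ (while $|B_{0}| \geq 2\delta$ forces $n \geq 8\delta \geq n + 6$, an immediate contradiction). Substituting into $n \geq |B_{0}| + 2d\delta$ gives $n \geq |B_{0}|\bigl(1 + 2\delta^{2} - \delta|B_{0}|\bigr)$, whose right-hand side is concave in $|B_{0}|$; checking the endpoints of the admissible interval $[4,(n-18)/4]$ yields $n^{2}-12n-28 \leq 0$ at $|B_{0}|=4$ (violated for $n \geq 15$) and $n^{2}-16n-132 \leq 0$ at $|B_{0}|=(n-18)/4$ (violated for $n \geq 23$), so the inequality fails throughout the interval whenever $n \geq 34$.

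Having established $\Delta(G^{*}) \leq 2$, the bridge-block tree $G^{*}$ is a path whose nodes are either bipartite bridgeless blocks (each admitting a 2-edge-coloring with the strong property by Lemma~\ref{lem8}) or degree-$2$ singletons. Iteratively applying Lemma~\ref{lem9} to the bridges reduces the computation of $pc(G)$ to evaluating $pc$ of single blocks augmented with at most two pendants; Lemma~\ref{lem6} converts the strong property from Lemma~\ref{lem8} into $pc \leq 2$ for those subgraphs, and each degree-$2$ singleton contributes $pc(K_{1,2})=2$. The main obstacle is the non-singleton sub-case of the degree bound on $G^{*}$, where the bipartite degree-sum and the concavity bookkeeping must be carried out carefully.
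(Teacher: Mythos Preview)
Your overall plan matches the paper's: exploit bipartiteness to bound end-block sizes from below by $2\delta$, rule out $\Delta(G^{*})\ge 3$ by a counting contradiction, and then feed the path-shaped $G^{*}$ into Lemmas~\ref{lem8},~\ref{lem6} and~\ref{lem9}. That skeleton is correct, and the final reduction along the path is exactly what the paper does.

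There is one genuine slip in the singleton sub-case. The inequality $(4-d)n \ge 4+6d$ that you obtain from $n \ge 1+2d\delta$ and $\delta \ge (n+6)/8$ is \emph{not} impossible when $d=3$: it merely says $n \ge 22$. To close that case you must actually use the third fact you listed, $\delta \le d = 3$, which together with $\delta \ge (n+6)/8$ forces $n \le 18$; \emph{that} contradicts $n \ge 22$. (For $d \ge 4$ your displayed inequality already fails outright.) So the idea is there, but the sentence ``which is impossible for $d\ge 3$'' is false as written.

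Where you genuinely diverge from the paper is the non-singleton sub-case. The paper first uses the end-block count to pin down $d=3$ and then $G^{*}\cong K_{1,3}$; once that is known, a non-singleton bipartite $B_{0}$ with only three incident bridges must itself satisfy $|B_{0}|\ge (n+2)/4$ by the bipartite minimum-degree argument, and simply adding $|B_{0}|+|L_{1}|+|L_{2}|+|L_{3}|$ already exceeds $n$. Your route---the degree-sum bound $d \ge |B_{0}|(2\delta-|B_{0}|)/2$, substitution into $n \ge |B_{0}|+2d\delta$, and a concave optimization over $|B_{0}|\in[4,(n-18)/4]$---reaches the same contradiction but with substantially heavier bookkeeping. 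The payoff of the paper's approach is that fixing $d=3$ at the outset (there are at most three end-blocks) makes the non-singleton case a one-line count; the payoff of yours is that it handles all $d\ge 3$ simultaneously without first establishing the $K_{1,3}$ structure.
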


\begin{proof}
For $n=4$, we can easily get that $G \cong K_{2,2}$. Thus $pc(G)=2$.

For $5 \le n \le 8$, we can get that $\delta(G) \geq 2$. Then Theorem \ref{thm7} implies that $pc(G)=2$.

For $n \ge 9$, we know that $\delta(G) \ge 2$. If $G$ is 2-edge-connected, then $pc(G)=2$ by Lemmas \ref{lem1} and \ref{lem8}. Next, we assume that $G$ has cut-edges. Let $G^*$ be the bridge-block tree of $G$. From Lemmas \ref{lem6}, \ref{lem8} and \ref{lem9}, we see that the result holds if $\Delta(G^*)\leq 2$. Hence, we only need to consider $\Delta(G^*)\geq 3$ in the following. In order to guarantee the minimum degree of $G$, we know that each end-block of $G$ contains at least $\frac{n+6}{4}$ vertices. We can also get that $G$ has at most three end-blocks. This implies that $\Delta(G^*)\leq 3$, and there is only one vertex attaining the maximum degree of $G^*$. Hence, we have $\Delta(G^*)= 3$.

Note that $\delta(G)\geq 2$. We have that each end-block of $G$ is a maximal connected bridgeless bipartite subgraph, and so it contains at least $4$ vertices. Thus, $n \ge 3 \times 4 +1=13$, which implies that $\delta(G)\geq 3$. Then, each end-block of $G$ contains at least $6$ vertices. Hence, $|V(L)|\geq \max\{6, \frac{n+6}{4}\}$ for each end-block $L$ of $G$.

Let $b_0\in V(G^*)$ be the vertex with degree $3$ and $B_0$ be the subgraph of $G$ corresponding to $b_{0}$. Let $L_i$ be the end-block of $G$ for $i=1,2,3$. Now we claim that $G^*\cong K_{1,3}$. Suppose $G^*\not\cong K_{1,3}$. Then, there is a vertex $v$ with degree $2$ in $G^*$. Denote by $B(v)$ the block of $G$ corresponding to $v$. Therefore, each vertex of $G^*$ other than $b_0$ corresponds to a maximal connected bridgeless subgraph of $G$ since $\delta(G)\geq 3$. By employing the minimum degree condition of $G$, we have that $B(v)$ contains at least $\frac{n+6}{4}$ vertices. Hence, $|V(G)|\geq |B_0|+|L_1|+|L_2|+|L_3|+|B(v)|\geq n+7$, a contradiction. Thus, we can get that $G^*\cong K_{1,3}$.

If $B_0$ is not a singleton, then $|B_0| \ge \frac{n+2}{4}$ since $\delta(G) \ge \frac{n+6}{8}$. Since $|L_i| \ge \frac{n+6}{4}$, we have that $|V(G)| \ge |B_0|+|L_1|+|L_2|+|L_3|\ge \frac{n+2}{4}+3\times \frac{n+6}{4} >n$, a contradiction. Now $B_0$ is a singleton. Since $d_G(B_0)=3$, we have $n \le 16$. On the other hand, we can get that $n\ge |B_0|+|L_1|+|L_2|+|L_3|\ge 1+3\times 6=19$ since $|L_i|\ge 6$ for $i=1,2,3$, a contradiction.

The proof of Theorem~\ref{thm8} is now complete.
\end{proof}

\end{document}